\title{\bf Universal lower bounds for potential energy of spherical codes}
\date{\today}
\newtheorem{theorem}{Theorem}[section]
\newtheorem{corollary}[theorem]{Corollary}
\newtheorem{conjecture}[theorem]{Conjecture}
\theoremstyle{definition}
\newtheorem{defn}[theorem]{Definition}
\newtheorem{example}[theorem]{Example}
\newtheorem{remark}[theorem]{Remark}
\newcommand{\Sp}{\mathbb{S}}
\newcommand{\R}{\mathbb{R}}
\author[P. Boyvalenkov]{P. G. Boyvalenkov $^\dagger$}
\address{Institute of Mathematics and Informatics, Bulgarian Academy of Sciences,
8 G Bonchev Str.,
1113  Sofia, Bulgaria \\
and Faculty of Mathematics and Natural Sciences, South-Western University, Blagoevgrad, Bulgaria.
}
\email{peter@math.bas.bg}
\thanks{\noindent $^\dagger$ The research of this author was supported, in part, by a Bulgarian NSF contract I01/0003.}
\author[P. Dragnev]{P. D. Dragnev $^{\dagger \dagger}$}
\address{Department of Mathematical Sciences,
Indiana-Purdue University
Fort Wayne, IN 46805, USA }
\email{dragnevp@ipfw.edu}
\thanks{\noindent $^{\dagger \dagger}$ The research of this author was supported, in part, by a Simons Foundation grant no. 282207.}
\author[D. Hardin]{D. P. Hardin$^*$}
\address{Center for Constructive Approximation, Department of Mathematics, \hspace*{.1in}
Vanderbilt University,
Nashville, TN 37240, USA  }
\email{doug.hardin@vanderbilt.edu}
\author[E. Saff]{E. B. Saff$^*$}
\email{edward.b.saff@vanderbilt.edu}
\thanks{\noindent $^*$ The research of these authors was supported, in part,
by the U. S. National Science Foundation under grants  DMS-1109266 and DMS-1412428.
}
\author[M. Stoyanova]{M. M. Stoyanova$^{**}$}
\address{Faculty of Mathematics and Informatics,
Sofia University,
5 James Bourchier Blvd.,
1164 Sofia, Bulgaria}
\email{stoyanova@fmi.uni-sofia.bg}
\thanks{
\noindent $^{**}$ The research of this author was supported, in part, by the Science Foundation of Sofia University under contract 015/2014.
}
\thanks{The authors express their gratitude to Erwin Schr\"{o}dinger International Institute for providing  conducive research atmosphere during their stay when part of this manuscript was written.}
\begin{document}

\begin{abstract} We derive and investigate lower bounds for the potential energy of finite spherical
point sets (spherical codes). Our bounds are optimal in the following sense -- they cannot be improved
by employing polynomials of the same or lower degrees in the Delsarte-Yudin method. However, improvements are sometimes possible and we provide a necessary and sufficient condition for the existence of such
better bounds. All our bounds can be obtained in a unified manner that does not depend on the potential function, provided the potential is given by an absolutely monotone function of the inner product between pairs of points, and this
is the reason for us to call them universal. We also establish a criterion for a given code of dimension $n$ and cardinality $N$ not to be LP-universally optimal, e.g. we show that two codes conjectured by Ballinger et al to be universally optimal
are not LP-universally optimal.
\end{abstract}
\keywords{minimal energy problems, spherical potentials, spherical codes and designs, Levenshtein bounds, Delsarte-Goethals-Seidel bounds, linear programming}
\subjclass[2010]{74G65, 94B65 (52A40, 05B30)}

\maketitle
\section{Introduction}

Minimal energy configurations, maximal codes, and spherical designs have wide ranging applications in various fields of science, such as crystallography, nanotechnology, material science, information theory, wireless communications, etc. In this article we shall derive lower bounds on the potential energy of such configurations via a unified method working for a large class of potential interaction functions. A fundamental connection between our lower bounds and the classical Delsarte-Goethals-Seidel bounds on spherical designs and Levenshtein's bounds on maximal codes is presented. For a fixed dimension and code cardinality the Delsarte-Goethals-Seidel bounds serve to localize the analysis and then, as illustrated in Figure 2, the zeros of the Levenshtein optimal polynomials for maximal codes determine the optimal polynomials for a large class of potentials.

Following Levenshtein's terminology (see \cite{Lev}) we call the lower bounds that we obtain universal. This choice of terms is also consistent with its use by Cohn and Kumar in their study \cite{CK} of universally optimal energy configurations, since our bounds likewise work for all absolutely monotone potential functions of the inner product. Furthermore,
our lower bounds are attained for all sharp configurations as defined in \cite{CK}.

Let $\Sp^{n-1}$ denote the unit sphere in $\R^n$.  We refer to a finite set $C \subset \Sp^{n-1}$ as a {\em spherical code} and, for a given (extended real-valued) function $h(t):[-1,1] \to [0,+\infty]$, we define the {\em $h$-energy} of a spherical code $C$ by
\begin{equation} E(C;h):=\sum_{x, y \in C, x \neq y} h(\langle x,y \rangle), \nonumber\end{equation}
where $\langle x,y \rangle$ denotes the inner product of $x$ and $y$. Note that for $x, y \in \mathbb{S}^{n-1}$ we have $|x-y|^2=2-2\langle x,y \rangle$.

A commonly arising problem is
to minimize the potential energy provided the cardinality $|C|$ of $C$ is fixed; that is, to determine
\begin{equation}
 \mathcal{E}(n,N;h):=\inf\{E(C;h):|C|=N, \, C\subset S^{n-1}\}\nonumber
 \end{equation}
the minimum possible $h$-energy of a spherical code of cardinality $N$ (see \cite{HS, SK}). Although the theorems in Section 2 hold for general potentials $h$ we will be especially concerned with functions $h(t)$ that are {\em absolutely monotone} ({\em absolutely strictly monotone}), that is $h^{(i)}(t)\geq 0$, $i=0,1,\dots$ ($h^{(i)}(t)> 0$, $i=0,1,\dots$). Some examples of absolutely monotone potentials include the {\em Riesz $\alpha$-potential} $h(t)=[2(1-t)]^{-\alpha/2}$, $\alpha>0$, and in particular the {\em Newton potential} (when $\alpha=n-2$); the {\em Gauss potential} $h(t)=e^{2t-2}$; the {\em Korevaar potential} $h(t)=(1+r^2-2rt)^{-(n-2)/2}$, $0<r<1$. Although the logarithmic potential $h(t)=-(1/2)\ln(1-t)$ is not positive on $[-1,0]$, all its derivatives are positive and the results in this article apply to this potential as well. The situation is similar for the Fejes-T\'{o}th potential $h(t)=-[2(1-t)]^{\alpha/2}$, $0<\alpha<2$, which includes the important particular case in discrete geometry of $\alpha=1$, namely of finding configurations that maximize the sum of all mutual distances.

A general  technique
(referred to here as the {\em Delsarte-Yudin method}) for obtaining lower bounds for the $h$-energy of arbitrary spherical codes
was developed by Yudin \cite{Y} using Delsarte's linear programming method \cite{D1,DGS,KL} and was further applied by Kolushov and Yudin \cite{KY}, Andreev \cite{And}, and Cohn and Kumar \cite{CK}. These bounds depend on the choice of  polynomials satisfying certain constraints.  Here we provide explicit solutions to Delsarte's linear program based upon Levenshtein's work on maximal codes \cite{Lev3} and \cite{Lev}, which allows us to establish universal lower bounds on potential energy for a large class of potential functions $h$.

In Section 2 we describe in a unified manner results from Delsarte, Goethals and Seidel \cite{DGS} and Levenshtein
\cite{Lev2,Lev3,Lev} that  are instrumental in defining our bounds. Theorems 2.3 and 2.6 explain the importance of special type quadrature rules in determining lower bounds on energy and investigation of their optimality.
Theorem~\ref{thm3.2} is one of the main results in this paper. It gives lower bounds which are optimal in the following sense -- they cannot be improved by polynomials of the same or
lower degree that satisfy the standard linear programming constraints specified in Theorem~\ref{thm1}. On the other hand, the bounds of Theorem~\ref{thm3.2}
can be further improved in some cases and Theorem~\ref{thm4.1} gives necessary and sufficient conditions
for existence of such improvements via the so-called {\em test functions}, which were first introduced and investigated for analysis of the Levenshtein bounds for maximal codes in 1996 by Boyvalenkov, Danev and Bumova \cite{BDD}. We derive a quantitative version of \cite[Theorem 5.2]{BDD} in Theorem \ref{Qposthm}, which provides a criterion for disproving that certain codes are LP-universally optimal. As an application we prove that the two codes conjectured to be universally optimal in \cite{BBCGKS}, are not LP-universally optimal, namely their universal optimality may not be established by an ad-hoc approach similar to the $600$-cell approach given in \cite{CK, CW}.

\section{Linear programming framework and $1/N$-quadrature rules}

\subsection{Gegenbauer polynomials and the Delsarte-Yudin linear programming \\framework}

For fixed dimension $n$, the Gegenbauer polynomials \cite{Sze} are
defined by  $P_0^{(n)}=1$, $P_1^{(n)}=t$ and the three-term recurrence relation
\[ (i+n-2)P_{i+1}^{(n)}(t)=(2i+n-2)tP_i^{(n)}(t)-iP_{i-1}^{(n)}(t)
                \mbox{ for } i \geq 1. \]
We note that $\{P_i^{(n)}(t)\}$ are orthogonal in $[-1,1]$ with respect to the weight $(1-t^2)^{(n-3)/2}$ and that $P_i^{(n)}(1)=1$. In standard Jacobi polynomial notation (see \cite[Chapter 4]{Sze}), we have that
\begin{equation}\label{Geg_Jacobi}
P_i^{(n)}(t)=\frac{P_i^{((n-3)/2,   (n-3)/2)}(t)}{P_i^{((n-3)/2,   (n-3)/2)}(1)}.
\end{equation}

Denote the space of real polynomials of degree at most $k$ by $\mathcal{P}_k$.  Any $f\in  \mathcal{P}_k$  can be uniquely expanded in terms of the Gegenbauer
polynomials as $f(t) = \sum_{i=0}^k f_iP_i^{(n)}(t)$. The
coefficients $f_i$    given by
\begin{equation}
f_i=\frac{\int_{-1}^1f(t)P_i^{(n)}(t)(1-t^2)^{(n-3)/2}\, dt}{\int_{-1}^1 \left[P_i^{(n)}(t)\right]^2(1-t^2)^{(n-3)/2}\, dt},\qquad i=0,1,\ldots,k,\nonumber
\end{equation}
play an important role in linear
programming theorems.

Let $\{Y_{k\ell}(x) : \ell=1,2,\ldots,r_k\}$  be an orthonormal basis of the
space $\mathrm{Harm}(k)$ of homogeneous harmonic polynomials in $n$ variables of
degree $k$ restricted to  $\Sp^{n-1}$,  where
\[r_k:=\dim \,\mathrm{Harm}(k)=\binom{n+k-3}{n-2}\frac {2k+n-2}{k}=\binom{n+k-1}{n-1}-\binom{n+k-3}{n-1}\,\]
and orthonormality is with respect to integration over the sphere utilizing $\sigma_n$, the normalized $(n-1)$-dimensional Hausdorff measure restricted to $\Sp^{n-1}$.   The functions $\{Y_{k\ell}$, $ \ell=1,2,\ldots,r_k\}$, are known as {\em spherical harmonics} of degree $k$.
 The Gegenbauer polynomials and  spherical harmonics are related through the well-known  {\em Addition Formula} (see \cite{Koor}):
\begin{equation}\label{addform}
\frac{1}{r_k}\sum_{\ell=1}^{r_k}Y_{k\ell}(x)Y_{k\ell}(y)=P_k^{(n)}(\langle x, y \rangle ),\ \ \ x,y\in \Sp^{n-1};\end{equation}
that is, the Gegenbauer polynomial $P_k^{(n)}(t)$ is, up to a normalization, the kernel for the orthogonal projection onto $\mathrm{Harm}(k)$.

If $f$ is a function integrable on $[-1,1]$ with respect to the weight function $(1-t^2)^{(n-3)/2}$ and $y$ is any fixed point on $\Sp^{n-1}$, then
the following relation (a partial case of the Funk-Hecke formula,  see \cite[Theorem 6]{Mul1966}) holds:
\begin{equation}\label {FunkHecke}
\int_{S^{n-1}}f(\langle x, y \rangle )d\sigma_n (x)=\gamma_n\int\limits_{-1}^{1}f(t)(1-t^2)^{(n-3)/2}dt,\nonumber
\end{equation}
 where
\begin {equation}\label {gamma_p}
\gamma_n:=
\frac{\Gamma
   \left(\frac{n}{2}\right)}{\sqrt{\pi } \Gamma
   \left(\frac{n-1}{2}\right)}.\nonumber
\end {equation}

If   $C=\{x_1,\ldots,x_N\}$ is a spherical code of $N$ points on $\Sp^{n-1}$, then  it follows from  \eqref{addform} that:
\begin {equation}\label {G_pos_def1}
 \sum_{i,j=1}^{N}P_k^{(n)}(\langle x_i, x_j \rangle )=\frac{1}{r_k} \sum_{\ell=1}^{r_k}\sum_{i,j=1}^{N}Y_{k \ell}(x_i)Y_{k \ell}(x_j)= \frac{1}{r_k}\sum_{\ell=1}^{r_k}\left(\sum_{i=1}^{N}Y_{k \ell}(x_i)\right)^2\geq 0.
\end {equation}
We define the $k$-th moment of $C$ by
 \begin{equation}\label{Mk0}
 M_k(C):=\sum_{i,j=1}^N P^{(n)}_k(\langle x_i , x_j\rangle ). \nonumber\end{equation}  From \eqref{G_pos_def1}, we have $M_k(C)=0$   if and only if
$\sum_{i=1 }^N Y(x_i)=0$ for all spherical harmonics $Y\in\mathrm{Harm}(k)$.  If $M_k(C)=0$ for  $1\le k\le \tau$, then  $C$  is called a
{\em spherical $\tau$-design}.  Equivalently,
$C$ is  a spherical $\tau$-design if and only if \begin{eqnarray*}
\label{defin_f.1}
 \int_{\mathbb{S}^{n-1}} p(x) d\sigma_n(x)= \frac{1}{|C|} \sum_{x \in C} p(x)
\end{eqnarray*}
($\sigma_n $ is the normalized $(n-1)$-dimensional Hausdorff measure) holds for all polynomials $p(x) = p(x_1,x_2,\ldots,x_n)$ of degree at most $\tau$.  The set
\begin{equation}\label{I_C}
\mathcal{I}(C):=\{k\in \mathbb{N}\colon M_k(C)=0\},
\end{equation}
 is called the {\em index set} of $C$.  Hence, $C$ is a spherical $\tau$-design if and only if  $\{1, 2, \ldots, \tau\}\subset \mathcal{I}(C)$.  \\

Suppose $f:[-1,1]\to \R$   is of the form
\begin{equation}\label{fform}
f(t)=\sum_{k=0}^\infty f_k P_k^{(n)}(t),\qquad     f_k\ge 0   \text{ for all } k\ge 1,
\end{equation}  where we remark that $f(1)=\sum_{k=0}^\infty f_k<\infty$.  Since $|P_k^{(n)}(t)|\le 1$, it follows that the right-hand side of \eqref{fform} converges uniformly on $[-1,1]$.   We then obtain the following relations which form the
basis for many packing and energy bounds for spherical codes $C=\{x_i\}_{i=1}^N $ of cardinality $N$ (see \cite{BHS,CK, KL, Y}):
\begin{equation}\label{glowbound}
\begin{split}
E(C;f)&=  \sum_{i,j=1}^N f(\langle x_i, x_j \rangle )-f(1)N\\ &=\sum_{k=0}^\infty f_k \sum_{i,j=1}^{N}P_k^{(n)}(\langle x_i, x_j \rangle )-f(1)N\\ &=\sum_{k=0}^\infty f_kM_k(C)-f(1)N\\ &\ge f_0N^2-f(1)N.
\end{split}
\end{equation}

Since $M_k(C)=0$ for $k=1,\ldots , \tau$ when $C$ is a $\tau$-design,   the following result immediately follows from \eqref{glowbound}.

\begin{theorem}[Delsarte, Goethals, Seidel \cite{DGS}]\label{DGS1}
Suppose $C$ is a spherical $\tau$-design on $\Sp^{n-1}$ and   $f(t)$ is a polynomial of degree at most $\tau$ such that
$f(t)\ge 0$ on $[-1,1]$ and $f_0=\gamma_n \int_{-1}^1 f(t)(1-t^2)^{(n-3)/2}\, dt>0$.  Then
\begin{equation}\label{Db}
|C|\ge \frac{f(1)}{f_0}.
\end{equation}
\end{theorem}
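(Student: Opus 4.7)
The plan is to derive the inequality directly from the chain of identities already developed in \eqref{glowbound} together with the defining property of a $\tau$-design. Since everything needed is essentially in place, the proof should be a short sequence of observations rather than a new construction.

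First, I would expand $f$ in the Gegenbauer basis. Because $\deg f \le \tau$, the expansion terminates:
\begin{equation*}
f(t)=\sum_{k=0}^{\tau} f_k P_k^{(n)}(t),
\end{equation*}
where $f_0=\gamma_n\int_{-1}^1 f(t)(1-t^2)^{(n-3)/2}\,dt$ by the orthogonality formula for the coefficients. Writing $N=|C|$ and $C=\{x_1,\dots,x_N\}$, I would then form the double sum $\sum_{i,j=1}^{N} f(\langle x_i,x_j\rangle)$ and interchange summation to get
\begin{equation*}
\sum_{i,j=1}^{N} f(\langle x_i,x_j\rangle)=\sum_{k=0}^{\tau} f_k M_k(C).
\end{equation*}

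Next, I would invoke the $\tau$-design hypothesis: by the definition of the index set in \eqref{I_C}, $M_k(C)=0$ for $1\le k\le \tau$, so only the $k=0$ term survives. Since $P_0^{(n)}\equiv 1$, we have $M_0(C)=N^2$, and hence $\sum_{i,j} f(\langle x_i,x_j\rangle) = f_0 N^2$.

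Finally, I would split the double sum into its diagonal ($i=j$) and off-diagonal parts. The diagonal contributes $Nf(1)$, and the off-diagonal sum is nonnegative since $f(t)\ge 0$ on $[-1,1]$ and each $\langle x_i,x_j\rangle\in[-1,1]$. Therefore
\begin{equation*}
f_0 N^2=\sum_{i,j=1}^{N} f(\langle x_i,x_j\rangle)\ge Nf(1),
\end{equation*}
and dividing by $Nf_0>0$ yields the desired bound $|C|\ge f(1)/f_0$. There is no real obstacle here: all the machinery (Gegenbauer expansion, positivity of $M_k$ via the Addition Formula, and the vanishing of moments for designs) has been laid out in the preceding subsection, so the only thing to verify is that the hypotheses $f\ge 0$ on $[-1,1]$ and $f_0>0$ are used in exactly the two places indicated above.
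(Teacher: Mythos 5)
Your proof is correct and follows essentially the same route as the paper, which simply notes that the bound "immediately follows from \eqref{glowbound}" once the design property forces $M_k(C)=0$ for $1\le k\le\tau$; you have merely written out the steps that the paper leaves implicit (that the Gegenbauer sum collapses to $f_0N^2$ exactly, and that nonnegativity of $f$ on $[-1,1]$ controls the off-diagonal terms). The two places where the hypotheses enter are identified correctly, so nothing further is needed.
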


Maximizing the right hand side of \eqref{Db} over polynomials satisfying the above hypotheses, Delsarte, Goethals, and Seidel \cite{DGS} obtain
a lower bound on
$$B(n,\tau):=\min\{|C|: C \subset \mathbb{S}^{n-1} \mbox{ is a spherical $\tau$-design}\}$$
Specifically, they show
\begin{equation}
\label{DGS-bound}
B(n,\tau) \geq D(n,\tau) := \left\{
\begin{array}{ll}
\displaystyle{ 2{n+k-2 \choose n-1}},& \mbox{if $\tau=2k-1$,} \\
 & \\
  \displaystyle{{n+k-1 \choose n-1}+{n+k-2 \choose n-1}}, & \mbox{if   $\tau=2k$}.
\end{array}
  \right.
\end{equation}
We refer to $D(n,\tau)$ as the {\em Delsarte-Goethals-Seidel  bound} for spherical $\tau$-designs.

\medskip

Another application of \eqref{glowbound} is Yudin's lower bound on energy.

\begin{theorem}[Yudin \cite{Y}]\label {thm1}
Suppose $f:[-1,1]\to \R$   is of the form
\eqref{fform}
with $f_k\ge 0$ for all $k\ge 1$.  Then, for $N\ge 2$
\begin{equation}\label{glowbound2}
 \mathcal{E}(n,N;f)\ge  f_0 N^2-f(1) N.\nonumber
\end{equation}
Consequently, if  $h:[-1,1]\to [0,\infty]$ satisfies $h(t)\ge f(t)$, $t\in [-1,1]$, we have
\begin{equation}\label{EKlowbound}
 \mathcal{E}(n,N;h)\ge  f_0N^2-f(1)N.
\end{equation}
Furthermore, $C$ is an optimal (energy minimizing)  code for $h$ and equality holds in \eqref{EKlowbound} if and only if  both of the following conditions hold:
\begin{itemize}
\item[{\rm (a)}] $f(t)=h(t)$  for all $t\in \{ \langle x, y \rangle  :  x\neq y, \ x,y \in C \};$
 \item[{\rm (b)}]  for all $k\ge 1$,   either $f_k=0$ or
$M_k (C)=0$.
\end{itemize}
\end{theorem}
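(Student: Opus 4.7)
The bulk of the theorem is already built into the chain of identities and inequalities \eqref{glowbound} preceding the statement, so the plan is to extract the bound, promote it to the majorization setting for $h$, and then trace the equality cases carefully through each inequality.

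First I would note that, since $P_0^{(n)}\equiv 1$, we have $M_0(C)=N^2$, and since $f_k\ge 0$ and $M_k(C)\ge 0$ for all $k\ge 1$ (the latter from \eqref{G_pos_def1}), the computation \eqref{glowbound} gives
\[
E(C;f)=\sum_{k=0}^{\infty} f_k M_k(C)-f(1)N \ge f_0 N^2-f(1)N
\]
for any spherical code $C$ of cardinality $N$. Infimizing over such $C$ yields the first bound. The monotonicity requirement $h\ge f$ on $[-1,1]$ then gives the pointwise bound $h(\langle x,y\rangle)\ge f(\langle x,y\rangle)$ for every pair $x\ne y$ in $C$, so $E(C;h)\ge E(C;f)\ge f_0 N^2-f(1)N$, and the second bound \eqref{EKlowbound} follows upon taking an infimum.

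For the equality characterization, I would chase equality through the two successive inequalities. The inequality $E(C;h)\ge E(C;f)$ is an equality precisely when $h(\langle x,y\rangle)=f(\langle x,y\rangle)$ for every pair $x\ne y$ in $C$, which is exactly condition (a). The inequality $E(C;f)\ge f_0 N^2-f(1)N$ is an equality precisely when
\[
\sum_{k=1}^{\infty} f_k M_k(C)=0;
\]
since each summand is nonnegative, this forces $f_k M_k(C)=0$ for every $k\ge 1$, which is condition (b). Conversely, if both (a) and (b) hold then both inequalities are tight and $E(C;h)=f_0N^2-f(1)N$, so $C$ attains the universal lower bound and is therefore optimal for $h$.

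There is essentially no hard step here: everything reduces to the already-established identity \eqref{glowbound} together with the two nonnegativity facts ($f_k\ge 0$ for $k\ge 1$ and $M_k(C)\ge 0$). The only mildly delicate point is handling the case where $h$ takes the value $+\infty$, but this causes no trouble because if any $h(\langle x,y\rangle)=+\infty$ with $x\ne y$ in $C$, then $E(C;h)=+\infty$ and the bound is trivial, while condition (a) would then fail (since $f$ is finite-valued on $[-1,1]$), correctly ruling $C$ out as an optimizer of a finite minimum. Thus the statement follows cleanly from bookkeeping of the two nonnegativity sources that drive the Delsarte--Yudin inequality.
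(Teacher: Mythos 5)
Your proposal is correct and follows exactly the route the paper intends: the theorem is presented there as an immediate consequence of the chain \eqref{glowbound} together with the nonnegativity of the $f_k$ and of the moments $M_k(C)$ from \eqref{G_pos_def1}, and your equality analysis (tightness of $E(C;h)\ge E(C;f)$ giving (a), and of $\sum_{k\ge 1}f_kM_k(C)\ge 0$ giving (b)) is the standard bookkeeping the paper leaves implicit. Nothing further is needed.
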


For a given $h:[-1,1]\to [0,\infty]$, we denote by $A_{n,h}$ the set of functions $f\le h$ satisfying the conditions \eqref{fform}. Recall that for such $f$, the coefficient sequence $(f_0,f_1,\ldots )\in \ell_1$.   The problem of maximizing the lower bound $f_0 N^2- f(1)N$ arising in Theorem~\ref{thm1} can then be expressed in terms of an infinite linear program:

 \begin{equation}\label{lp_problem}
\begin{split}
\text{maximize} \quad  &F(f_0,f_1,\ldots ) := N\left(f_0(N-1)-\sum_{k= 1}^\infty f_k \right), \\
 \text{subject to }   &   \sum_{k=0}^\infty f_kP_k^{(n)}(t)\le h(t), t\in [-1,1]\, \text{ and }      f_k\ge 0, \text{ for all } k\ge 1.
 \end{split}
 \end{equation}
\medskip

In the following we shall consider   the above linear program   restricted  to a subspace $\Lambda$ (usually finite-dimensional)  of the linear space $C([-1,1])$ of real-valued functions continuous  on $[-1,1]$.  For such a $\Lambda$, we define
\begin{equation}\label{Wdef} \mathcal{W}(n,N,\Lambda;h):= \sup_{f \in \Lambda \cap  A_{n,h}} N^2(f_0 -f(1)/N) . \end{equation}
In general, it can be a difficult problem to find the value of $\mathcal{W}(n,N,\Lambda;h)$.  We consider sufficient conditions that allow us to
solve for  $\mathcal{W}(n,N,\Lambda;h)$. In particular, we explicitly find the solutions of the truncated linear program \eqref{lp_problem} and thus find \eqref{Wdef} when $\Lambda=\mathcal{P}_k$, for all $k\leq \tau(n,N)$, for some $\tau(n,N)$ (as defined in equation \eqref{tauNn} below). In the particular case when $m=\tau(n,N)$ we derive the universal lower bound (ULB) for potential energy of spherical codes.

\subsection{$1/N$-Quadrature rules and  lower bounds for energy}
  We refer to a finite sequence
of ordered pairs $\{(\alpha_i, \rho_i)\}_{i=1}^{k}$ as a {\em $1/N$-quadrature rule}  if
  $-1 \leq \alpha_1 < \alpha_2 <
\cdots <\alpha_{k} < 1$,
 and
$\rho_i>0$ for $i=1,2,\ldots,k$,
 and say that $\{(\alpha_i, \rho_i)\}_{i=1}^{k}$ is {\em exact} for a subspace  $\Lambda\subset C([-1,1])$  if
\begin{equation}
\label{defin_f0.1}
f_0:=\gamma_n\int_{-1}^1f(t)(1-t^2)^{(n-3)/2}dt= \frac{f(1)}{N}+ \sum_{i=1}^{k} \rho_i f(\alpha_i),
\end{equation}
for all $f\in \Lambda$.

\begin{theorem}\label{THM_subspace}
Let $\{(\alpha_i, \rho_i)\}_{i=1}^{k}$ be a $1/N$-quadrature rule that is exact for a subspace $\Lambda\subset C([-1,1])$.
\begin{enumerate}
\item[(a)] If $f\in \Lambda\cap A_{n,h}$, then
\begin{equation}\label{Quad1}
\mathcal{E}(n,N;h)\ge  N^2\sum_{i=1}^{k} \rho_i f(\alpha_i).\nonumber
\end{equation}
\item[(b)] We have \begin{equation}\label{Wineq1}\mathcal{W}(n,N,\Lambda;h)\le N^2\sum_{i=1}^{k} \rho_i h(\alpha_i).\end{equation}
\end{enumerate}
If there is some $f\in \Lambda \cap  A_{n,h}$ such that $f(\alpha_i)=h(\alpha_i)$ for $i=1,\ldots, k$, then
equality holds in \eqref{Wineq1} which yields the   universal lower bound
\begin{equation}\label{ULB1}
\mathcal{E}(n,N;h)\ge  N^2\sum_{i=1}^{k} \rho_i h(\alpha_i).
\end{equation}
\end{theorem}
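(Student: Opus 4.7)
The plan is to combine Yudin's bound (Theorem~\ref{thm1}) with the exactness of the quadrature rule, using the pointwise inequality $f \le h$ as the lever that converts bounds on $f$ into bounds on $h$. The key algebraic identity I want to exploit is that for $f\in\Lambda$, exactness of $\{(\alpha_i,\rho_i)\}_{i=1}^k$ rewrites
\[
f_0 - \frac{f(1)}{N} = \sum_{i=1}^k \rho_i f(\alpha_i),
\]
so the Delsarte--Yudin functional $N^2(f_0 - f(1)/N)$ becomes a positive linear combination of values of $f$ at points of $[-1,1)$. This is what makes the whole argument work: the abstract LP objective acquires a concrete interpretation as a finite weighted sum, and any pointwise domination $f\le h$ propagates directly to the corresponding bound.

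For part (a), I would take $f\in\Lambda\cap A_{n,h}$, apply Yudin (Theorem~\ref{thm1}) to get $\mathcal{E}(n,N;h)\ge N^2(f_0 - f(1)/N)$, and then substitute the quadrature identity above to obtain the claimed bound. For part (b), I would start from the same identity: for any $f\in\Lambda\cap A_{n,h}$, since $\rho_i>0$ and $f(\alpha_i)\le h(\alpha_i)$ at each node $\alpha_i\in[-1,1]$, I get
\[
N^2\left(f_0 - \frac{f(1)}{N}\right) = N^2\sum_{i=1}^k \rho_i f(\alpha_i) \le N^2\sum_{i=1}^k \rho_i h(\alpha_i).
\]
Taking the supremum over $f\in\Lambda\cap A_{n,h}$ yields the upper bound on $\mathcal{W}(n,N,\Lambda;h)$.

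For the final assertion, if some $f^\ast\in\Lambda\cap A_{n,h}$ interpolates $h$ at all nodes, i.e.\ $f^\ast(\alpha_i)=h(\alpha_i)$, then the chain of inequalities just displayed collapses to equalities for $f^\ast$, so the supremum in \eqref{Wdef} is attained and equals $N^2\sum_i \rho_i h(\alpha_i)$. Part (a) applied to $f^\ast$ then gives the ULB \eqref{ULB1}.

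I do not anticipate a genuine obstacle here: every ingredient is in place once the quadrature identity is invoked; the only care needed is to verify that the weights $\rho_i$ being strictly positive is what lets $f\le h$ on $[-1,1]$ be transferred across the sum, and that each $\alpha_i$ lies in $[-1,1]$ so that the domination $f(\alpha_i)\le h(\alpha_i)$ is legitimate (note $h$ is defined on $[-1,1]$, and the nodes satisfy $\alpha_i<1$ by definition of the quadrature rule, so the value $h(1)$, which may be $+\infty$, never enters). The theorem is therefore essentially a clean restatement of Yudin's inequality in the language of quadrature, with the extremal case identified via the interpolation condition $f(\alpha_i)=h(\alpha_i)$.
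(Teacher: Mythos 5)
Your proof is correct and follows essentially the same route as the paper: part (a) is Yudin's bound combined with the quadrature identity, part (b) uses the same identity plus $\rho_i>0$ and $f(\alpha_i)\le h(\alpha_i)$, and the equality case is handled identically. The extra observation that the nodes lie in $[-1,1)$ so the possibly infinite value $h(1)$ never enters is a sensible (if unstated in the paper) sanity check.
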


\begin{proof}
If $f\in \Lambda$, then \eqref{defin_f0.1} holds and so, from Theorem~\ref{thm1}, we obtain
$$
\mathcal{E}(n,N;h)\ge N^2(f_0- {f(1)}/{N})=N^2\sum_{i=1}^{k} \rho_i f(\alpha_i),
$$
showing that (a) holds.

For (b),  using \eqref{defin_f0.1}, we obtain
\begin{equation}
\begin{split}\mathcal{W}(n,N,\Lambda;h)&= \sup_{f \in \Lambda \cap  A_{n,h}} N^2(f_0 -f(1)/N) \\ &=\sup_{f \in \Lambda \cap  A_{n,h}}N^2\sum_{i=1}^{k} \rho_i f(\alpha_i)\le   N^2\sum_{i=1}^{k} \rho_i h(\alpha_i).
\end{split}\nonumber
\end{equation}
Clearly equality holds  if  there is some $f\in \Lambda \cap  A_{n,h}$ such that $f(\alpha_i)=h(\alpha_i)$ for $i=1,\ldots, k$.
\end{proof}

As we next describe, a spherical code $C=\{x_1,\ldots, x_N\}\subset \Sp^{n-1}$  provides a quadrature rule that is exact on the subspace $$\Lambda_C:= \left\{f(t)=f_0+\sum_{l\in \mathcal{I}(C)} f_lP_l^{(n)}(t)\colon  \sum_{l\in \mathcal{I}(C)} |f_l|<\infty \right\},
$$
with $I(C)$ as defined  in \eqref{I_C}.
Let
$$
\left\{\langle x_i, x_j\rangle\colon x_i\neq x_j \in C\right\}=:\{-1\le \alpha_1<\alpha_2<\cdots<\alpha_{k}<1\},
$$
and let $\{q_l\}$ denote the inner product distribution; i.e., $$q_l:=\frac {\big|\{(i,j) \colon \langle x_i,x_j\rangle=\alpha_l\}\big|}{N^2}, \qquad l=1,\ldots, k.$$
If $f\in \Lambda_C$, then $f_l=0$ for all $l\not \in\mathcal{I}(C)$ (unless $l=0$) and   equality holds in  \eqref{glowbound}.  Hence, for such $f$, we obtain
\begin{equation}\label{C_quadrature}
 f_0=\frac{1}{N^2}\big(E(C;f)+Nf(1)\big)=  \frac{f(1)}{N}+\sum_{l=1}^{k}q_l f(\alpha_l),
\end{equation}
that is,  $\{(\alpha_l, q_l)\}_{l=1}^{k}$  is a $1/N$-quadrature rule  exact for   $\Lambda_C$.

\noindent
\begin{example}\label{600-cell}
As an example we consider the 600-cell  $C$ consisting of 120 points  in  $\Sp^3$.  Each $x\in C$ has 12 nearest neighbors forming an icosahedron (the Voronoi cells are dodecahedra) and there are $8$  inner products $-1=\alpha_1<\alpha_2<\cdots< \alpha_8<1$ between distinct points in $C$.
If $f(t)\le h(t)$ on $[-1,1]$ and $f(\alpha_k)=h(\alpha_k)$ and  for all $\alpha_k>-1$, then we must also have $f'(\alpha_k)=h'(\alpha_k)$, resulting in $2\cdot 7+1=15$ interpolation conditions.   If $C$ were a 14-design, then this would suggest  we search for $f\in A_{4,h} \cap  \Lambda$ with $\Lambda=\mathcal{P}_{14}$.  However,  $C$ is only an 11-design (i.e., $M_{12}(C)\neq 0$), although $M_{13}(C)=\cdots = M_{19}(C)=0$, so $C$ is almost a 19-design.  This suggests we choose $\Lambda$ to be a 15-dimensional subspace of  $\mathcal{P}_{19}\cap \{P_{12}^{(4)}\}^\perp$.     In fact,  Cohn and Kumar \cite[Section 7]{CK} show that for any  absolutely monotone potential $h$ on $[-1,1]$, there is a unique $f\in A_{n,h}\cap \Lambda$ for $\Lambda:=\{ f\in \mathcal{P}_{17}\colon f_{11}=f_{12}=f_{13}=0\}$ that proves the optimality of $C$.
\end{example}

\begin{example} \label{sharp_conf}
Another example is provided by the so-called {\em sharp configurations} \cite{CK}, namely configurations with $k$ distinct inner products that are spherical designs of strength $2k-1$. In this case $\Lambda=\mathcal{P}_{2k-1}$ and the existence of the $1/N$-quadrature is provided by the configuration quadrature \eqref{C_quadrature} and the design property. We shall return to this example in the Remark \ref{Rmk3.3} following Theorem \ref{thm3.2}.
\end{example}

The two examples above cover all currently known universally optimal configurations. The next theorem provides sufficient conditions for optimality of \eqref{ULB1} even in a larger subspace.

\begin{theorem}\label{THM_subspace_improve}
 Let $\{(\alpha_i, \rho_i)\}_{i=1}^{k}$ be a $1/N$-quadrature rule that is exact for a  subspace $\Lambda\subset C([-1,1])$ and such that
 equality holds in \eqref{Wineq1}.  Suppose $\Lambda'=\Lambda \bigoplus \text{span }\{P_j^{(n)} \colon  j\in I\}$ for some   index set $I\subset {\mathbb N}$.
If $Q_j^{(n)}:= \frac{1}{N}+\sum_{i=1}^{k}\rho_i P_j^{(n)}(\alpha_i)   \ge 0 $ for $j\in I$, then
\[
\mathcal{W}(n,N,\Lambda';h)=\mathcal{W}(n,N,\Lambda;h)=N^2\sum_{i=1}^{k}\rho_i h(\alpha_i).
\]
\end{theorem}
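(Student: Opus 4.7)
The obvious inclusion $\Lambda\subset \Lambda'$ immediately gives $\mathcal{W}(n,N,\Lambda;h)\le \mathcal{W}(n,N,\Lambda';h)$, so in view of the equality hypothesis, the whole task reduces to proving the reverse estimate $\mathcal{W}(n,N,\Lambda';h)\le N^2\sum_{i=1}^{k}\rho_i h(\alpha_i)$. My plan is to pick an arbitrary $g\in \Lambda'\cap A_{n,h}$, use the direct-sum hypothesis to write $g=f+\sum_{j\in I}c_j P_j^{(n)}$ uniquely with $f\in \Lambda$, and then evaluate the objective $N^2\bigl(g_0-g(1)/N\bigr)$ by applying the quadrature rule \eqref{defin_f0.1} to $f$ (which is permissible because $f\in \Lambda$).

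Three easy observations feed the computation. First, since $P_j^{(n)}$ for $j\geq 1$ is orthogonal to the constant $P_0^{(n)}\equiv 1$ against the weight $(1-t^2)^{(n-3)/2}$, the zeroth Gegenbauer coefficient of $g$ equals that of $f$, so $g_0=f_0$. Second, $P_j^{(n)}(1)=1$, so $g(1)=f(1)+\sum_{j\in I}c_j$. Third, writing $f(\alpha_i)=g(\alpha_i)-\sum_{j\in I}c_j P_j^{(n)}(\alpha_i)$ and substituting into the quadrature identity for $f$ yields, after collecting terms, the clean formula
\[
N^2\bigl(g_0-g(1)/N\bigr)=N^2\sum_{i=1}^{k}\rho_i\, g(\alpha_i)-N^2\sum_{j\in I}c_j\, Q_j^{(n)},
\]
in which the exact combination $1/N+\sum_i \rho_i P_j^{(n)}(\alpha_i)=Q_j^{(n)}$ from the theorem statement appears naturally.

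To finish, I would use $g\le h$ together with the positivity $\rho_i>0$ to bound the first sum above by $N^2\sum_i \rho_i h(\alpha_i)$, and combine the hypothesis $Q_j^{(n)}\ge 0$ with the nonnegativity of the $c_j$ to conclude that the subtracted term is $\ge 0$. Taking the supremum over $g\in \Lambda'\cap A_{n,h}$ then produces the desired upper bound, and chaining the two inequalities with the assumed value of $\mathcal{W}(n,N,\Lambda;h)$ collapses everything to the single quantity $N^2\sum_i\rho_i h(\alpha_i)$.

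The one step that requires some care is the assertion $c_j\ge 0$. Since $g\in A_{n,h}$ its Gegenbauer expansion $g=\sum_k g_k P_k^{(n)}$ has $g_k\ge 0$ for $k\ge 1$; uniqueness of the direct-sum decomposition then forces the identification $c_j=g_j$ precisely when $\Lambda$ is compatible with the Gegenbauer basis in the sense that no element of $\Lambda$ has a nonzero $P_j^{(n)}$-component for $j\in I$. This is the natural reading of $\Lambda\bigoplus \mathrm{span}\{P_j^{(n)}:j\in I\}$ and is automatic in every case of interest here, where $\Lambda$ itself is spanned by Gegenbauer polynomials with indices disjoint from $I$. This is the main conceptual obstacle and, once dispatched, all remaining steps are routine linear-algebraic manipulations.
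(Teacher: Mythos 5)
Your proposal is correct and follows essentially the same route as the paper: decompose $g=f+\sum_{j\in I}c_jP_j^{(n)}$ with $f\in\Lambda$, apply the quadrature identity to $f$, and absorb the extra terms into $\sum_{j\in I}c_jQ_j^{(n)}\ge 0$. The one subtlety you flag --- that $c_j\ge 0$ requires identifying the $c_j$ with the Gegenbauer coefficients of $g$, i.e.\ that $\Lambda$ carries no $P_j^{(n)}$-component for $j\in I$ --- is assumed implicitly (and without comment) in the paper's own proof, so your explicit treatment of it is if anything a slight improvement.
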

\begin{proof}
  Suppose $f(t) \in A_{n,h}\cap\Lambda'$.  Then  we may write the decomposition of $f$ as
\begin{equation}
\label{n1}
f(t)= g(t)+\sum_{j\in I}  f_j P_j^{(n)}(t),\nonumber
\end{equation}
for some $g\in \Lambda$ and $f_j\ge 0$, for $j\in I$.  Note that $f_0=g_0$, since $0\not \in I$. Furthermore, since the quadrature rule  $\{(\alpha_i, \rho_i)\}_{i=1}^{k}$ is exact for $g\in \Lambda$, we have
\begin{eqnarray*}
f_0- f(1)N^{-1} &=& g_0 - f(1)N^{-1} = \frac{g(1)}{N}+\sum_{i=1}^{k} \rho_i g(\alpha_i) -\left(g(1)+\sum_{j\in I}  f_j \right)N^{-1} \\
    &=& \sum_{i=1}^{k} \rho_i \left(f(\alpha_i)-\sum_{j\in I}  f_j P_j^{(n)}(\alpha_i)\right)-\left(\sum_{j\in I}  f_j \right)N^{-1}\\
    &=&\sum_{i=1}^{k} \rho_i  f(\alpha_i)-\sum_{j\in I}  f_j \left(\frac{1}{N}+ \sum_{i=1}^{k}\rho_i P_j^{(n)}(\alpha_i) \right)\\
    &=& \sum_{i=1}^{k} \rho_i  f(\alpha_i)-\sum_{j\in I}f_jQ_j^{(n)} \le \sum_{i=1}^{k} \rho_i  h(\alpha_i)=\frac{1}{N^2}\mathcal{W}(n,N,\Lambda;h),
\end{eqnarray*}
 where, for the last inequality, we used $f(t) \in A_{n,h}$ and $Q_j^{(n)} \geq 0$.
 \end{proof}

\subsection{Levenshtein bounds for spherical codes}

Let
$$A(n,s):=\max\{|C| \colon C \subset \mathbb{S}^{n-1}, \langle x,y \rangle \leq s, \,   x\neq y \in C\}$$ denote the maximal possible cardinality of a spherical code on $\mathbb{S}^{n-1}$ of prescribed maximal
inner product $s$.

For $a,b \in \{0,1\}$ and $i \geq 1$, let $t_i^{a,b}$   denote the greatest zero of the adjacent
Jacobi polynomial $P_i^{(a+\frac{n-3}{2},b+\frac{n-3}{2})}(t)$ and also define $t_0^{1,1}=-1$.   For $\tau\in \mathbb{N}$, let   $\mathcal{I}_\tau$ denote the interval
\begin{eqnarray*}
  \mathcal{I}_\tau :=
\left\{
\begin{array}{ll}
    \left [ t_{k-1}^{1,1},t_k^{1,0} \right ], & \mbox{if } \tau=2k-1, \\ &
    \ \\
    \left [ t_k^{1,0},t_k^{1,1} \right ],      & \mbox{if } \tau=2k, \\
  \end{array}\right.
\end{eqnarray*}
The collection of intervals is well defined from the interlacing properties $ t_{k-1}^{1,1}<t_k^{1,0}<t_k^{1,1}$, see \cite[Lemmas 5.29, 5.30]{Lev}. Note also that it partitions  $\mathcal{I}=[-1,1)$ into countably many subintervals with non-overlapping interiors.

For every $s \in \mathcal{I}_\tau$, using linear programming
bounds for special polynomials $f_\tau^{(n,s)}(t)$ of degree $\tau$ (see \cite[Equations (5.81) and (5.82)]{Lev}), Levenshtein proved that (see \cite[Equation (6.12)]{Lev})
\begin{equation}
\label{L_bnd}
 A(n,s) \leq
\left\{
\begin{array}{ll}
    L_{2k-1}(n,s) = {k+n-3 \choose k-1}
         \big[ \frac{2k+n-3}{n-1} -
          \frac{P_{k-1}^{(n)}(s)-P_k^{(n)}(s)}{(1-s)P_k^{(n)}(s)}
         \big] ,&\mbox{if }s\in \mathcal{I}_{2k-1}\cr
& \\
    L_{2k}(n,s) = {k+n-2 \choose k}
        \big[ \frac{2k+n-1}{n-1} -
           \frac{(1+s)( P_k^{(n)}(s)-P_{k+1}^{(n)}(s))}
    {(1-s)(P_k^{(n)}(s)+P_{k+1}^{(n)}(s))} \big] ,&\mbox{if } s\in \mathcal{I}_{2k}.\cr
   \end{array}\right.
\end{equation}
For every fixed dimension $n$ each bound $L_\tau (n,s)$ is smooth with respect to $s$. The function
\[ L(n,s) =
\left\{
\begin{array}{ll}
    L_{2k-1}(n,s),& \mbox{ if } s \in \mathcal{I}_{2k-1}, \cr
\\
   L_{2k}(n,s), & \mbox{ if } s \in \mathcal{I}_{2k} \cr
    \end{array}\right.
\]
is continuous in $s$. The connection between the Delsarte-Goethals-Seidel bound (\ref{DGS-bound}) and the
Levenshtein bounds (\ref{L_bnd}) is given by the equalities
\begin{equation}\label{L-DGS1}
\begin{split}
L_{2k-2}(n,t_{k-1}^{1,1})&=
L_{2k-1}(n,t_{k-1}^{1,1}) = D(n,2k-1),\\
  L_{2k-1}(n,t_k^{1,0})&=
L_{2k}(n,t_k^{1,0}) = D(n,2k).
\end{split}
\end{equation}
and the ends of the intervals $\mathcal{I}_\tau$.

\begin{figure}[ht]
\begin{center}
\vspace{1mm}
\includegraphics[scale=.34]{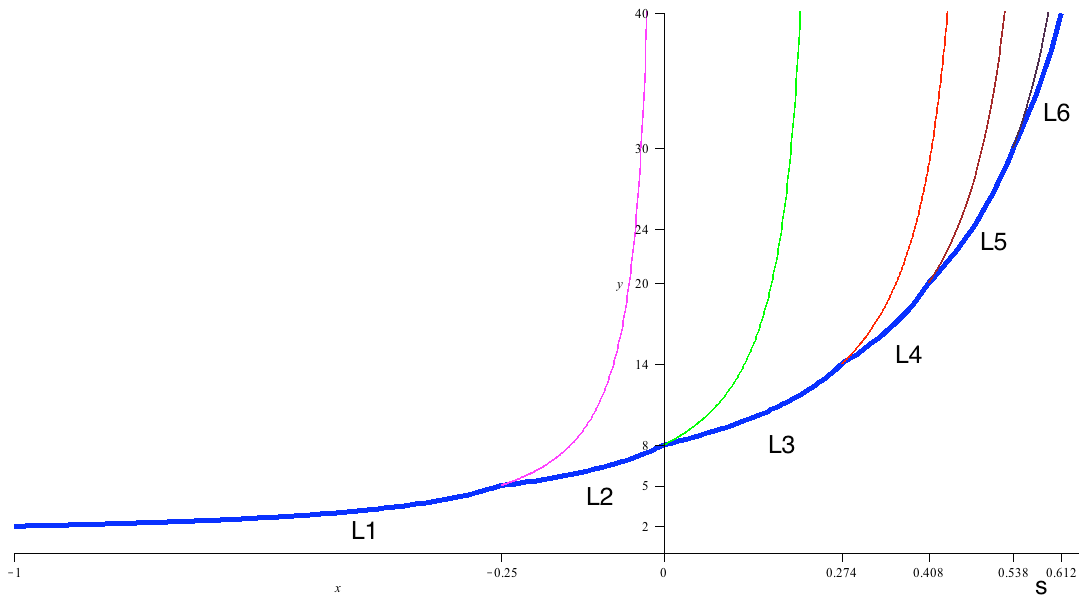} \\
\caption{The Levenshtein function $L(4,s)$ on $\mathcal{I}_k$, $1\leq k\leq 6$. }
\end{center}
\end{figure}

\subsection{Levenshtein's $1/N$-quadrature rule}

Levenshtein's method for obtaining his bounds on cardinality of maximal spherical codes utilizes orthogonal polynomials theory and Gauss-type quadrature rules that we now briefly review. The location of the cardinality $N$ relative to the Delsarte-Goethals-Seidel numbers $D(n,\tau)$ is an important step in determining our universal lower bounds. From the properties of the bounds $D(n,\tau)$ and $L_\tau (n,s)$
(see (\ref{DGS-bound}), \eqref{L-DGS1}) we derive that for every fixed dimension $n$ and cardinality $N$ there is unique
\begin{equation}
\label{tauNn}
\tau:=\tau(n,N) \quad \text{such that} \quad N \in (D(n,\tau),D(n,\tau+1)].
\end{equation}
For the so found $\tau$ define $k:= \left\lceil \frac{\tau+1}{2} \right\rceil$ and let $\alpha_k=s$ be the unique solution of
\begin{equation} \label{Lns_eq}
N=L_{\tau}(n,s), \quad s \in {\mathcal I}_\tau. \end{equation}
Then as described by Levenshtein in \cite[Section 5]{Lev} (see also \cite{Lev3,BBD}) there exist
uniquely determined quadrature nodes and nonnegative weights
\begin{equation} \label{quadrature_nodes}
-1 \leq \alpha_1 <
\cdots <\alpha_k < 1,\quad \rho_1,\ldots,\rho_k \in \mathbb{R}^+,\quad i=1,\ldots,k
\end{equation}
such that the Radau/Lobato $1/N$-quadrature (see \cite{D}, \cite{BBMQ}) holds
\begin{equation}
\label{defin_qf}
f_0= \frac{f(1)}{N}+\sum_{i=1}^{k} \rho_i f(\alpha_i), \ \ \mbox{ for all}\  f\in \mathcal{P}_{\tau}.
\end{equation}
When $\tau=2k-2$ is even, then $\alpha_1=-1$ and \eqref{defin_qf} is Lobato quadrature.
The numbers $\alpha_i$, $i=2,\ldots,k$, are the roots of the equation
\begin{equation}
P_{k-1}(t)P_{k-2}(\alpha_k) - P_{k-1}(\alpha_{k})P_{k-2}(t)=0,\label{op_eq1}
\end{equation}
where $P_i(t)=P_i^{(\frac{n-1}{2},\frac{n-1}{2})}(t)$.
When $\tau=2k-1$ is odd, then $\alpha_1>-1$ and \eqref{defin_qf} becomes Radau quadrature.
The numbers $\alpha_i$, $i=1\ldots,k$, are the roots of the equation
\begin{equation}
P_k(t)P_{k-1}(\alpha_k) - P_k(\alpha_{k})P_{k-1}(t)=0,\label{op_eq}
\end{equation}
where $P_i(t)=P_i^{(\frac{n-1}{2},\frac{n-3}{2})}(t)$. In fact, $\{\alpha_i\}$
are roots of the Levenshtein's polynomials $f_{\tau}^{(n,\alpha_{k})}(t)$ (see \cite[Equations (5.81) and (5.82)]{Lev}).

The dynamical behavior of the quadrature nodes $\{\alpha_i\} $ is the following.
When $N\in (D(n,2k-2), D(n,2k-1))$ then  $\alpha_1 =-1$ and the quadrature
\eqref{defin_qf} is Lobato. The solution $\alpha_k $ of \eqref{Lns_eq} belongs
to the interval $ (t_{k-1}^{1,0},t_{k-1}^{1,1}) $ and  all $\{\alpha_i\}_{i=2}^{k}$
strictly increase with $N$. We have that
\[ 1=|\alpha_1|>|\alpha_2|>|\alpha_k|>|\alpha_3|>|\alpha_{k-1} |>\cdots . \]
At the transition point $N=D(n,2k-1)$, $\alpha_1 =-1$ and $\alpha_k =t_{k-1}^{1,1} $. The equation \eqref{op_eq1} becomes $P_{k-1}^{(n+2} (t)=0$, which implies that
\[ 1=|\alpha_1|>|\alpha_2|=|\alpha_k|>|\alpha_3|=|\alpha_{k-1}| >\cdots . \]
As $N$ increases from $D(n,2k-1)$ to $D(n,2k)$, $\alpha_k$
strictly increases from $t_{k-1}^{1,1}$ to $t_k^{1,0}$, as do the rest of the nodes
$\{\alpha_i\}_{i=1}^{k-1}$. In particular, $\alpha_1>-1$  and \eqref{defin_qf} defines Radau quadrature and
\[ 1>|\alpha_1|>|\alpha_k|>|\alpha_2|=|\alpha_{k-1} |>\cdots . \]
More details on the nodes $\{\alpha_i\} $ can be found in \cite[Appendix]{BDL}, \cite[Corollary 3.9]{BD}, and \cite[Section 2.6]{BoumovaPhD}.

\section{Universal lower bounds}

\subsection{Optimal polynomials for lower bounds}

The optimal polynomials of degrees one and two to be applied in Theorem~\ref{thm1} can be found
by direct computations and manipulations with the corresponding derivatives. These polynomials suggest a
general form of polynomials which are optimal in the following sense -- they give lower bounds
which cannot be improved by utilizing other polynomials of the same or lower degree in Theorem~\ref{thm1}.

Our choice of polynomials for Theorem~\ref{thm1} can be viewed as extension of the ideas of Levenshtein \cite{Lev3,Lev}
who uses suitable quadrature formulas (Subsection 2.4) to explain the bounds (\ref{L_bnd}) and their
optimality in the same sense as above. This similarity should not seem unusual -- the maximal code problem
is infinite version of the Riesz energy problem. In fact, Cohn and Kumar \cite{CK} use similar idea to
deal with the universally optimal configurations. Thus, our paper can be viewed as natural extension of
the works \cite{Lev3,Lev,CK}.
Recall that given a fixed dimension $n$ and a code cardinality $N$ we can associate $\tau=\tau(n,N)$ and $ s \in  \mathcal{I}_\tau$ such that $L_\tau (n,s)=N$ (see \eqref{tauNn} and \eqref{Lns_eq}). Depending on the parity of $\tau$ we distinguish two cases:

Case (i):  $\tau=2k-2$ and $\alpha_k=s\in \left( t_{k-1}^{1,0},t_{k-1}^{1,1}\right]$. Then $f(t):=f_{\tau(n,N)}^h (t)$ is the Hermite interpolation polynomial of degree $2k-2$ defined by (recall that $\alpha_1=-1$ in this case)
\begin{equation} \label{Hermite_beta} f(-1)=h(-1), \ f(\alpha_i)=h(\alpha_i), \ f^\prime(\alpha_i)=h^\prime(\alpha_i), \ i=2,\ldots,k. \end{equation}

Case (ii): $\tau=2k-1$ and $\alpha_k=s\in \left( t_{k-1}^{1,1},t_k^{1,0}\right]$. Then $f(t):=f_{\tau(n,N)}^h (t)$ is the Hermite interpolation polynomial of degree $2k-1$ defined by
\begin{equation} \label{Hermite_alpha} f(\alpha_i)=h(\alpha_i), \ f^\prime(\alpha_i)=h^\prime(\alpha_i), \ i=1,2,\ldots,k; \end{equation}

In the notation of Cohn-Kumar's paper \cite[p. 110]{CK}, our polynomials are
\begin{equation}\label{OptPoly_Lev}
 f_{\tau(n,N)}^h (t)=H(h; (t-s)f_{\tau}^{(n,s)} (t)).\end{equation}

\subsection{Main theorem}
The equations \eqref{Hermite_beta} and \eqref{Hermite_alpha} define a Hermite's interpolation problem for $f(t)$ to intersect and touch the
graph of the potential function $h(t)$ (see \cite[Theorems 2 and 3]{KY}, \cite[Section 5]{CK}). This implies as in  \cite[Sections 3 and 5]{CK} that $f\in A_{n,h}$ and
we could use $f(t)$ for bounding ${\mathcal E}(n,N;h)$ from below. Observe that the nodes \eqref{quadrature_nodes} are independent of the potential function $h$, hence we call our bound on ${\mathcal E}(n,N;h)$ {\em a universal lower bound} (ULB).

Next, we state our main theorem. We note that here is the first time when we impose the condition that the potential function $h(t)$ is absolutely monotone and that none of the preceding results have required this property.

\begin{theorem}\label{thm3.2} Let $n$, $N$  be fixed and $h(t)$ be an absolute monotone potential. Suppose that $\tau=\tau(n,N)$ is as in \eqref{tauNn}, and choose $k= \left\lceil \frac{\tau+1}{2} \right\rceil$. Associate the quadrature nodes and weights  $\alpha_i$ and $\rho_i$, $i=1,\ldots,k$, as in \eqref{defin_qf}.
Then
\begin{equation}
\label{bound_odd}
 {\mathcal E}(n,N;h) \geq R_{\tau}(n,N;h):=N^2\sum_{i=1}^{k} \rho_i h(\alpha_i) .
 \end{equation}
Moreover, the polynomials defined by (i), respectively by (ii), provide the unique optimal solution of the linear program \eqref{Wdef} for the subspace $\Lambda=\mathcal{P}_\tau$ and consequently,
\begin{equation}\label{LP_optimality}
\mathcal{W}(n,N, \mathcal{P}_\tau;h)=R_{\tau}(n,N;h).
\end{equation}
\end{theorem}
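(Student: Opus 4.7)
The plan is to apply Theorem \ref{THM_subspace}(b) with the subspace $\Lambda=\mathcal{P}_\tau$ and the Levenshtein $1/N$-quadrature $\{(\alpha_i,\rho_i)\}_{i=1}^k$, which is exact on $\mathcal{P}_\tau$ by \eqref{defin_qf}. To derive the universal lower bound \eqref{bound_odd} from the conclusion \eqref{ULB1} of that theorem, it suffices to produce some $f\in A_{n,h}\cap\mathcal{P}_\tau$ with $f(\alpha_i)=h(\alpha_i)$ for every $i$. The natural candidate is the Hermite interpolant $f=f^h_{\tau(n,N)}$ specified in cases (i) and (ii). A dimension count shows it is uniquely determined in $\mathcal{P}_\tau$: case (ii) imposes $2k$ conditions against $\dim\mathcal{P}_{2k-1}=2k$, while case (i) imposes $1+2(k-1)=2k-1$ conditions against $\dim\mathcal{P}_{2k-2}=2k-1$.

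The remaining work is to verify that this $f$ actually lies in $A_{n,h}$, i.e.\ that $f(t)\le h(t)$ on $[-1,1]$ and that all Gegenbauer coefficients $f_j$ with $j\ge 1$ are nonnegative. For the pointwise inequality, the Hermite remainder formula yields
\begin{equation*}
h(t)-f(t)=\frac{h^{(\tau+1)}(\xi_t)}{(\tau+1)!}\,W(t),
\end{equation*}
where $W(t)=\prod_{i=1}^k(t-\alpha_i)^{m_i}$ with multiplicities $m_1=1$, $m_2=\cdots=m_k=2$ in case (i) and $m_1=\cdots=m_k=2$ in case (ii). In either case $W(t)\ge 0$ on $[-1,1]$ (using $t+1\ge 0$ where needed), and absolute monotonicity of $h$ gives $h^{(\tau+1)}\ge 0$, hence $h\ge f$.

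The nonnegativity of the Gegenbauer coefficients $f_j$ for $j\ge 1$ is the main obstacle. The plan is to exploit the factored representation \eqref{OptPoly_Lev} of $f$ together with the Christoffel--Darboux-type structure of the Levenshtein polynomial $f^{(n,s)}_\tau$, whose zeros are precisely the $\alpha_i$ and whose Gegenbauer expansion is known to be nonnegative (this is the mechanism underlying the Levenshtein bound \eqref{L_bnd}). Following the strategy of Cohn--Kumar \cite{CK}, one shows that the Hermite interpolant $H(h;(t-s)f^{(n,s)}_\tau(t))$ inherits this nonnegativity under absolute monotonicity of $h$; the argument combines the sum-of-squares form of the Levenshtein kernel with a careful accounting of the contributions of the interpolation conditions, essentially reducing the question to positivity of certain divided differences of $h$ on $[-1,1]$, which is immediate from $h^{(k)}\ge 0$ for all $k$. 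Once this is in hand, Theorem \ref{THM_subspace}(b) immediately delivers \eqref{bound_odd} and the identity $\mathcal{W}(n,N,\mathcal{P}_\tau;h)=R_\tau(n,N;h)$.

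For the uniqueness asserted by \eqref{LP_optimality}, suppose $g\in A_{n,h}\cap\mathcal{P}_\tau$ also attains the supremum. Since the quadrature is exact on $\mathcal{P}_\tau$ and $\rho_i>0$, combining $g\le h$ with the equality of the quadrature sums forces $g(\alpha_i)=h(\alpha_i)$ at every node. At any interior node $\alpha_i\in(-1,1)$, the nonnegative function $h-g$ then attains an interior zero, so $g'(\alpha_i)=h'(\alpha_i)$. Together with $g(-1)=h(-1)$ in case (i), these reproduce precisely the Hermite conditions \eqref{Hermite_beta} or \eqref{Hermite_alpha}; the dimension count from the first paragraph gives $g=f$, completing the plan.
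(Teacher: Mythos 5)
Your overall architecture is the same as the paper's: build the Hermite interpolant at the Levenshtein nodes, show it lies in $A_{n,h}$, feed it into the exactness relation \eqref{defin_qf} (equivalently Theorem~\ref{THM_subspace}(b)) to get both the bound and the value of $\mathcal{W}(n,N,\mathcal{P}_\tau;h)$, and derive uniqueness from the forced touching conditions at the nodes. Your treatment of $f\le h$ via the Hermite remainder formula with $W(t)=(t+1)\prod_{i\ge 2}(t-\alpha_i)^2$ (case (i)) or $\prod_i(t-\alpha_i)^2$ (case (ii)) is actually more explicit than the paper's one-line appeal to absolute monotonicity, and your uniqueness argument matches the paper's.

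The gap is in the step you yourself identify as the main obstacle: nonnegativity of the Gegenbauer coefficients $f_j$, $j\ge 1$. Writing ``following the strategy of Cohn--Kumar, one shows that the interpolant inherits this nonnegativity'' defers the entire difficulty to \cite{CK} without checking the hypothesis under which their machinery applies. The Cohn--Kumar positivity argument is not a general fact about Hermite interpolation at arbitrary nodes of a positive-definite polynomial; it requires the interpolation nodes to be the roots of a specific combination of adjacent Jacobi polynomials with a coefficient of the correct sign, so that the partial products $\prod_{i\le l}(t-\alpha_i)$ appearing in the Newton form all have nonnegative Gegenbauer expansions. Concretely, in the odd case the paper verifies via \eqref{op_eq} that $\{\alpha_i\}$ are the zeros of $P_k(t)+cP_{k-1}(t)$ with $P_i=P_i^{(\frac{n-1}{2},\frac{n-3}{2})}$ and then proves $c=-P_k(s)/P_{k-1}(s)\ge 0$ for $s\in[t_{k-1}^{1,1},t_k^{1,0})$, using interlacing together with the explicit value $-P_k(t_{k-1}^{1,1})/P_{k-1}(t_{k-1}^{1,1})=\frac{n+2k-3}{n+2k-1}>0$; only then can the argument of \cite[Sections 3 and 5]{CK} be invoked. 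In the even case the corresponding positivity is not covered by \cite{CK} at all and the paper instead cites \cite[Lemma 10]{CW}. Your reduction to ``positivity of certain divided differences of $h$'' handles the coefficients of $h$ in the Newton basis but not the positive-definiteness of the basis polynomials themselves, which is where the sign condition on $c$ (and the separate even-case lemma) is indispensable. Supplying that verification would close the proof.
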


\begin{remark} The optimality of the Hermite interpolants \eqref{OptPoly_Lev} is analogous to the optimality of the Levenshtein polynomials $f_\tau^{(n,s)} (t)$ (proved first by Sidelnikov \cite{S}), and emphasizes the universality of our bound.
\end{remark}

\begin{remark} \label{Rmk3.3} As noted in Example \ref{sharp_conf}, the sharp configurations (see \cite{CK}) define $1/N$-quadrature. Moreover, the $k$ inner products coincide with $\{\alpha_i\}$. Consequently, the bounds \eqref{bound_odd} are attained by all sharp configurations.
\end{remark}

\begin{proof}[Proof of Theorem 3.1] We first consider the odd case (ii), that is $\tau=2k-1$. The conditions in (ii) define Hermite interpolation at the points
$\alpha_i$, $i=1,2,\ldots,k$, and give a unique polynomial $f$ of degree $2k-1$ with positive
leading coefficient. The absolute monotonicity of $h(t)$ implies that $f(t)\leq h(t)$.

Next we derive that $f$ satisfies the condition \eqref{fform} as well. From \eqref{op_eq} we have that the quadrature nodes $\{\alpha_1,\dots,\alpha_{k}\}$ are zeros of the polynomial $P_k (t)+cP_{k-1}(t)$, where $\{P_i\}$ are the Jacobi orthogonal polynomials $\{P_i^{(\frac{n-1}{2},\frac{n-3}{2})}\}$. From the interlacing properties of the orthogonal polynomials we obtain that the constant $c=-P_k (s)/P_{k-1}(s)$ is non-negative. Indeed, the largest roots of the Jacobi polynomials  $t_{k-1}^{1,0}$ of $P_{k-1}$ satisfy $ t_{k-1}^{1,0}<t_{k-1}^{1,1}$ (see \cite{Lev3}). Since the last but largest root of $P_k$ is smaller than $t_{k-1}^{1,0}$ (by the interlacing property), we obtain that the ratio $P_k (t)/P_{k-1}(t)$ doesn't change sign in $[t_{k-1}^{1,1},t_k^{1,0})$. Moreover,  from \cite[Lemma 3.1.3 (a)]{BoumovaPhD}  (see also \cite[Lemma 1.5.8]{Boy_dis})

\[\displaystyle{-\frac{P_k (t_{k-1}^{1,1})}{P_{k-1}(t_{k-1}^{1,1})}=\frac{n+2k-3}{n+2k-1}}>0,\]
hence $c\geq 0$. Utilizing the approach of \cite[Sections 3 and 5]{CK} we conclude that the Hermite interpolant $f$ has non-negative Gegenbauer expansion. Therefore, $f \in A_{n,h}$.

We now use (\ref{defin_qf}) to derive the universal bound of $f$. We have
\[ f_0= \frac{f(1)}{N}+ \sum_{i=1}^{k} \rho_i
f(\alpha_i) \iff N(f_0N-f(1))=N^2\sum_{i=1}^{k} \rho_i f(\alpha_i)=N^2\sum_{i=1}^{k} \rho_i h(\alpha_i), \]
which means that
$ {\mathcal E}(n,N;h) \geq N^2\sum_{i=1}^{k} \rho_i h(\alpha_i)=R_{2k-1}(n,N;h)$.

Furthermore, for any polynomial $u =\sum_{i=0}^{2k-1} u_i P_i^{(n)}(t) \in A_{n,h}$ of degree at most $2k-1$
we have
\begin{equation} \label{Main_Ineq} N(f_0N-f(1))=N^2\sum_{i=1}^{k} \rho_i h(\alpha_i) \geq N^2\sum_{i=1}^{k} \rho_i u(\alpha_i)=N(u_0N-u(1)), \end{equation}
i.e. $N(u_0N-u(1)) \leq R_{2k-1}(n,N;h)$ and $u(t)$ does not improve (\ref{bound_odd}).

Should equality hold in \eqref{Main_Ineq} for some $u \in A_{n,h}\cap \mathcal{P}_{2k-1}$, we observe that $u(\alpha_i )=h(\alpha_i)$ for $i=1,2,\dots, k$. Additionally, the condition $u(t)\leq h(t)$ implies that $u^\prime (\alpha_i)=h^\prime (\alpha_i)$ for all $\alpha_i\in(-1,1)$. Hence, $u$ satisfies the Hermite interpolation data \eqref{Hermite_alpha}, and by the uniqueness of the Hermite interpolant, $u\equiv f$. Therefore,
$f$ is the unique optimal solution to the linear programming problem \eqref{lp_problem} in the class $A_{n,h}\cap \mathcal{P}_{2k-1}$ and \eqref{LP_optimality} holds.

In the even case (i) we proceed analogously, where we only modify the proof of the non-negativity of the Gegenbauer expansion. In this case we utilize \cite[Lemma 10]{CW}.
\end{proof}

\begin{figure}[ht]\label{K2}
\begin{center}
\vspace{1mm}
\includegraphics[scale=.53]{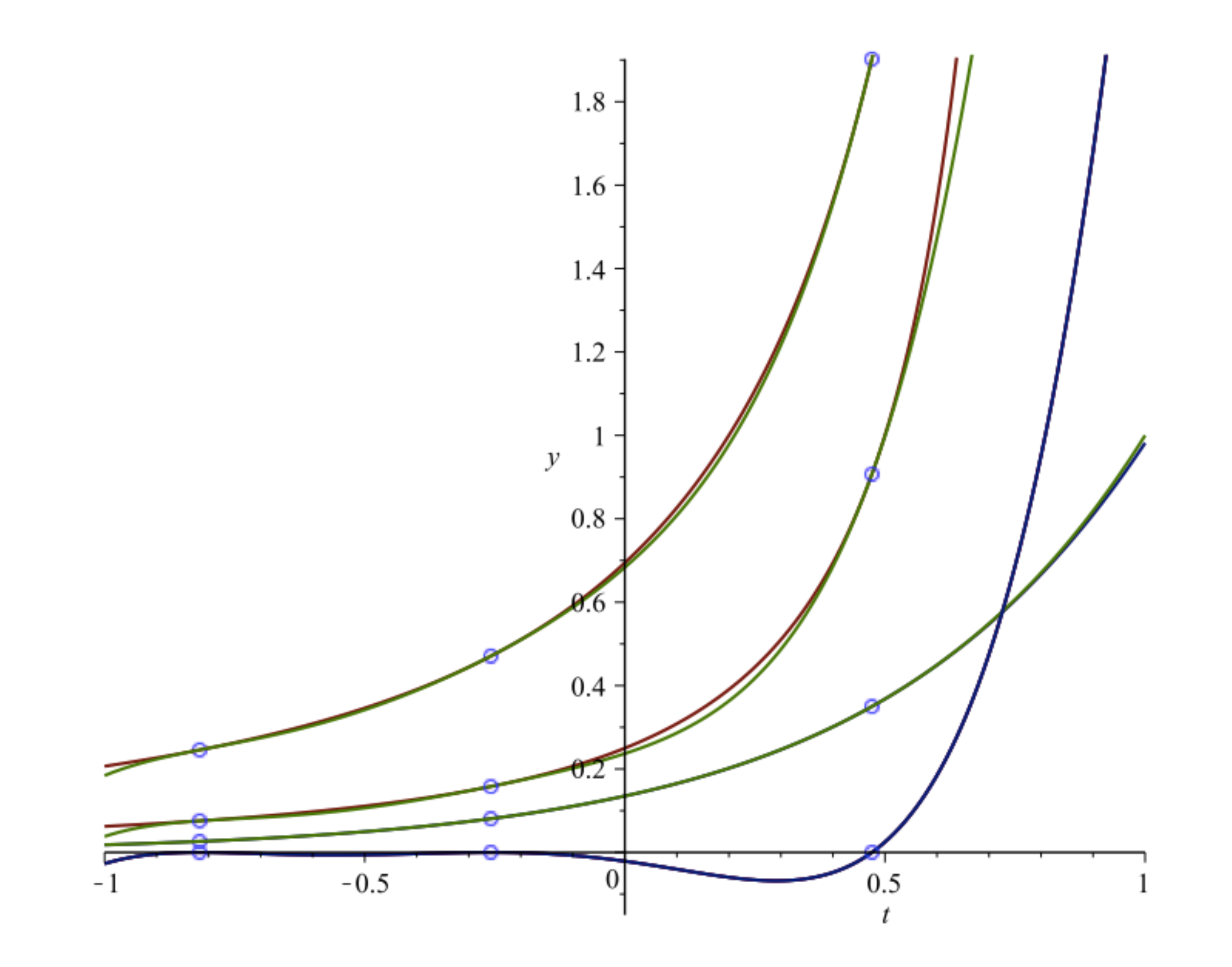}
\end{center}
\caption{The optimal polynomials (Hermite interpolants), that provide the ULB for Gauss, Korevaar, and Newton potentials (in ascending order), along with the corresponding Levenshtein polynomial for $n=4$, $N=24$}
\end{figure}

\subsection{Discussion and examples}
The bounds (\ref{bound_odd}) are easy for computation and
investigation. Moreover, the approach by which they were derived doesn't depend on the potential function and in this sense they are universal. This universality is illustrated in Figure 2, where we consider  $n=4$, $N=24$ and plot the Gauss, Korevaar, and Newton potential functions, together with the corresponding optimal Hermite interpolants of degree $\tau=5$, that solve the linear program \eqref{lp_problem} in the class $\mathcal{P}_5$. We also overlay the Levenshtein polynomial $f_5^{(4,s)}(t)$, whose zeros are the solutions of \eqref{op_eq}, where $s$ satisfies $L_5 (4,s)=24$. These zeros of the Levenshtein polynomial also serve as quadrature nodes for the universal lower bound \eqref{bound_odd} and as Hermite interpolation nodes for the optimal LP polynomials.

In \cite{BBCGKS} the authors have done an extensive experimental investigation of energy-minimizing point configurations, in particular they provide the computational minimizers for the Newton potential energy ($h(t)=[2(1-t)]^{-(n-2)/2}$) when $n=1,2,\dots,32$ and $N=1,2,\dots,64$. Table 1 compares the Newton energy from \cite{BBCGKS} and our universal lower bound (ULB) when $n=4$ and $N=5,6,\dots,64$.

\begin{table}[ht]\label{NEtable}
\begin{center}
\includegraphics[scale=.43]{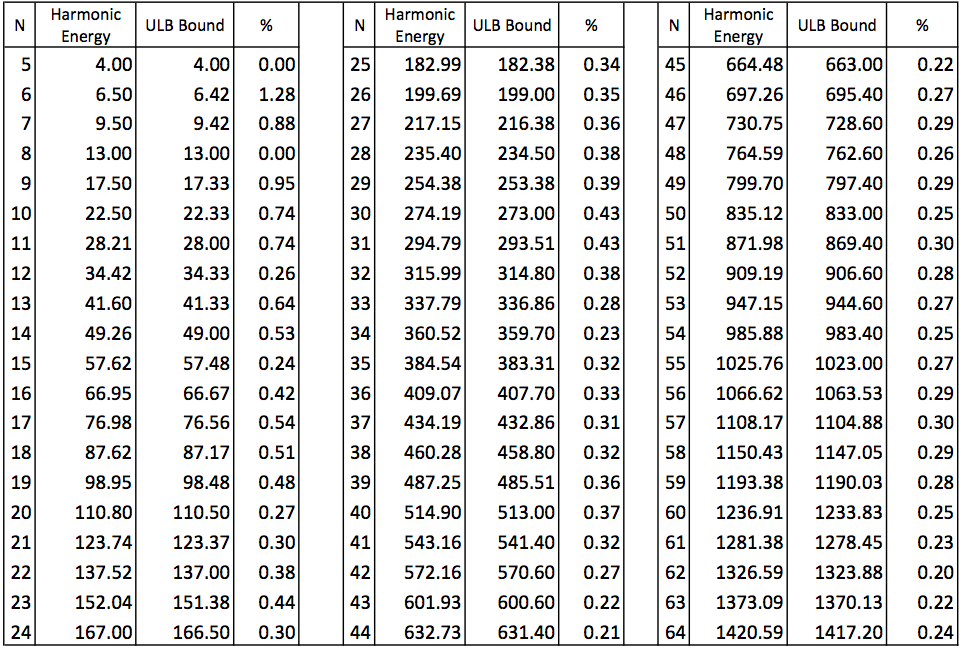}
\end{center}
\vskip 5mm
\caption{Newtonian (harmonic) energy comparison (see \cite{BBCGKS}) with ULB for $n=4$, $N=5, \dots ,64$.}
\end{table}

Utilizing the same Newton energy-minimizing configurations provided in \cite{BBCGKS} in Table 2 we compare  our universal lower bound (ULB) with the Gauss potential ($h(t)=e^{2t-2}$) energies of these configurations, which in general provide upper bounds on the minimal Gauss energy for the same choice of $n=4$ and $N=5,6,\dots,64$. We note that the error dramatically improves, which is to be expected, as the Hermite interpolants of analytic potential functions are excellent approximants. Observe that for $N=5$ and $N=8$ the bounds are exact. Both cases are universally optimal.

\begin{table}[ht]\label{GEtable}
\begin{center}
\includegraphics[scale=.43]{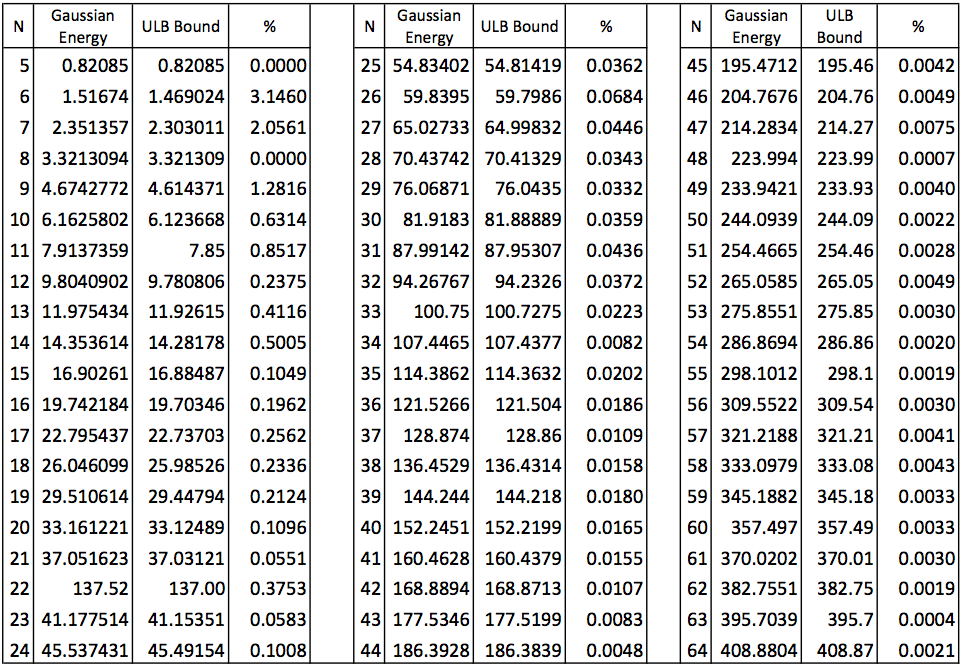}
\end{center}
\vskip 5mm
\caption{Gauss energy of the harmonic optimal configurations (as provided in \cite{BBCGKS}) compared with ULB for $n=4$, $N=5,\dots , 64$.}
\end{table}

As a consequence of the proof of Theorem \ref{thm3.2} we describe the explicit LP solutions for $m\leq \tau(n,N)$ in the next corollary.

\begin{corollary}
The linear program (LP) can be solved for any $m\le \tau(n,N)$ and the solution in the class $\mathcal{P}_m \cap A_{n,h}$ is given by the Hermite interpolants at the Levenshtein nodes determined by $N=L_m (n,s)$.
\end{corollary}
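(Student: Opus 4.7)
The plan is to run the proof of Theorem~\ref{thm3.2} with $m$ playing the role of $\tau$; the only genuinely new issue is that, when $m<\tau(n,N)$, the parameter $s$ solving $N=L_m(n,s)$ is pushed past the right endpoint of the Levenshtein interval $\mathcal{I}_m$, so one must check that the quadrature-and-interpolation apparatus of Section~2.4 remains intact in this extended regime.

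First, I would verify that $N=L_m(n,s)$ admits a unique solution $s=\alpha_k$ (with $k=\lceil(m+1)/2\rceil$), using monotonicity of $L_m(n,\cdot)$ together with the inequality $N\leq D(n,\tau+1)$; that the analogues of \eqref{op_eq1} and \eqref{op_eq} with this $\alpha_k$ produce real, simple nodes $\alpha_1<\cdots<\alpha_k$; and that the Radau/Lobato weights $\rho_i$ remain positive, so that \eqref{defin_qf} gives a bona fide $1/N$-quadrature exact on $\mathcal{P}_m$. Next, define $f=f_m^h$ as the Hermite interpolant of $h$ at the $\alpha_i$ with multiplicities dictated by the parity of $m$, mirroring cases (i)--(ii) of Section~3.1 with $m$ in place of $\tau$. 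Absolute monotonicity of $h$ and the standard error representation for Hermite interpolation yield $f\leq h$ on $[-1,1]$, while the sign computation from the proof of Theorem~\ref{thm3.2} (augmented by \cite[Lemma 10]{CW} in the even case) forces $f_j\geq 0$ for every $j\geq 1$. Hence $f\in A_{n,h}\cap\mathcal{P}_m$.

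The LP optimality then follows exactly as in Theorem~\ref{thm3.2}: using exactness of the quadrature on $\mathcal{P}_m$, for any $u\in A_{n,h}\cap\mathcal{P}_m$,
\begin{equation*}
N^2\left(u_0-\frac{u(1)}{N}\right)=N^2\sum_{i=1}^{k}\rho_i u(\alpha_i)\leq N^2\sum_{i=1}^{k}\rho_i h(\alpha_i)=N^2\left(f_0-\frac{f(1)}{N}\right),
\end{equation*}
so $f$ achieves $\mathcal{W}(n,N,\mathcal{P}_m;h)$. Uniqueness is extracted from the equality case: $u(\alpha_i)=h(\alpha_i)$ for every $i$, and the constraint $u\leq h$ on $[-1,1]$ then pins down $u'(\alpha_i)=h'(\alpha_i)$ at every interior node, giving exactly the Hermite data that uniquely determine $f$.

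The hard part will be the first step, namely extending the dynamical-node analysis at the end of Section~2.4 past the transition point $N=D(n,m+1)$: one has to follow $\alpha_k$ as it moves beyond the right endpoint of $\mathcal{I}_m$, rule out collisions or excursions of the smaller nodes outside $[-1,1]$, and re-examine the sign identities (such as $-P_k(\alpha_k)/P_{k-1}(\alpha_k)\geq 0$ in the odd case) on the enlarged $s$-range, so that the Gegenbauer-positivity argument for $f$ survives intact.
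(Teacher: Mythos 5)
Your plan is the one the paper intends --- the corollary is stated there as an immediate consequence of the proof of Theorem~\ref{thm3.2}, with no separate argument --- and you have correctly located the only point where that proof does not transfer verbatim. But you have left that point unresolved, and it is a genuine gap rather than a routine verification. For $m<\tau(n,N)$ one has $N>D(n,\tau)\ge D(n,m+1)$, and by \eqref{L-DGS1} the value $D(n,m+1)$ is precisely the value of $L_m(n,\cdot)$ at the right endpoint of $\mathcal{I}_m$; so the solution $s$ of $N=L_m(n,s)$ lies strictly to the right of $\mathcal{I}_m$, as you say. There, however, the sign computation that drives the proof of Theorem~\ref{thm3.2} actually reverses rather than merely needing to be ``re-examined'': in the odd case $m=2k-1$ the nodes are the roots of $P_k(t)+cP_{k-1}(t)$ with $c=-P_k(s)/P_{k-1}(s)$ and $P_i=P_i^{(\frac{n-1}{2},\frac{n-3}{2})}$, and once $s$ exceeds $t_k^{1,0}$ (the greatest zero of $P_k$) both $P_k(s)$ and $P_{k-1}(s)$ are positive, so $c<0$. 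Hence the step ``$c\ge 0$, therefore the Cohn--Kumar machinery yields non-negative Gegenbauer coefficients'' fails in its hypothesis on the enlarged range; the even case is analogous. The same caveat applies to the positivity of the weights $\rho_i$ and the realness/simplicity of the nodes, which Section~2.4 only establishes for $s\in\mathcal{I}_\tau$.

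To complete the argument you must therefore supply a positivity proof valid for $s$ beyond $\mathcal{I}_m$ --- for instance via the prescribed-abscissa quadrature results of \cite{BBMQ} for the existence and positivity of the Radau/Lobatto weights, together with a separate verification that the Hermite interpolant at these quasi-orthogonal nodes with $c<0$ still has non-negative Gegenbauer coefficients. The rest of your write-up (exactness of the quadrature on $\mathcal{P}_m$, the chain of inequalities giving $\mathcal{W}(n,N,\mathcal{P}_m;h)$, and uniqueness via the forced Hermite data at interior nodes) is correct and identical to the paper's mechanism. In fairness, the paper asserts the corollary without addressing this issue either, so your diagnosis of where the real work lies is accurate; it simply is not carried out in the proposal.
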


\begin{example}  Here we present the suboptimal LP solutions for $n=4$ and $N=24$.
In this case $\tau(n,N)=5$. For $m=1,\dots,5$ we find the intersection of $N=24$
with $L_1(4,s),\dots, L_5 (4,s)$. The corresponding suboptimal solutions as
Gegenbauer expansions (up to three digits) are:
\begin{eqnarray*}
f_1 (t)&=&.499P_0(t)+.229P_1(t)\\
f_2 (t)& =& .581P_0(t)+.305P_1(t)+0.093P_2(t)\\
f_3(t)&=&.658P_0(t)+.395P_1(t)+.183P_2(t)+0.069P_3(t)\\
f_4(t)&=&.69P_0(t)+.43P_1(t)+.23P_2(t)+.10P_3(t)+0.027P_4(t)\\
f_5(t)&=&.71P_0(t)+.46P_1(t)+.26P_2(t)+.13P_3(t)+0.05P_4(t)+0.01P_5(t).
\end{eqnarray*}
\end{example}

\vspace{-3mm}
\begin{figure}[ht]
\begin{center}
\vspace{1mm}
\includegraphics[scale=.37]{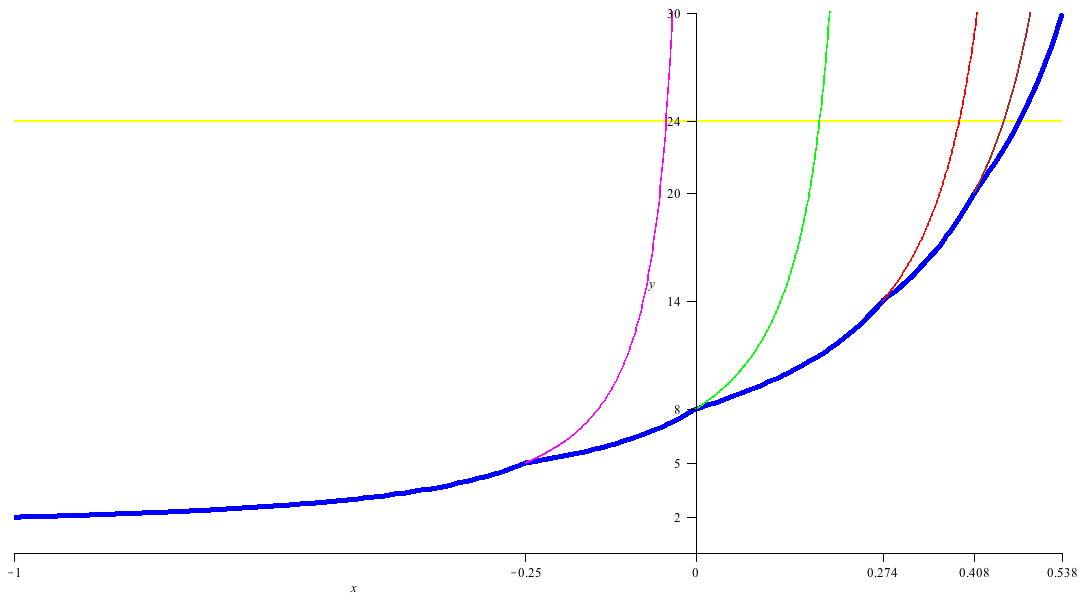}
\end{center}
\vskip 5mm
\caption{Suboptimal LP solutions for $n=4$ and $N=24$.}
\end{figure}

\smallskip

A natural question is whether linear programming bounds can be improved if we
consider polynomials of higher than $\tau(n,N)$ degree. The next section
investigates this topic. As one would expect from our results thus far presented,
the analogy with the situation for maximal spherical codes is quite close.

\section{Necessary and sufficient conditions for optimality of the universal lower bounds}

\subsection{Test functions}

Let $n$ and $N$ be fixed, $\tau=\tau(n,N)$ and $L_\tau(n,s)=N$ be as in \eqref{tauNn} and \eqref{Lns_eq}, and $j$ be a positive integer.
We introduce the following functions in $n$ and $s=\alpha_k$:
\begin{equation}
\label{test-functions}
Q_j(n,s):=\frac{1}{N}+\sum_{i=1}^{k} \rho_i P_j^{(n)}(\alpha_i)
                 \quad \text{for \ $s \in {\mathcal I}_{\tau}$}.
\end{equation}
It follows that $Q_j(n,s)=0$ for $1 \leq j \leq \tau$ and
$s \in {\mathcal I}_{\tau}$ (since this is the coefficient
$f_0=0$ in the Gegenbauer expansion of $P_j^{(n)}(t)$). Thus the functions $Q_j(n,s)$ are not interesting for these
cases and so we assume below that $j \geq \tau+1$ when $s \in {\mathcal I}_{\tau}$.

The next theorem shows that the functions $Q_j(n,s)$ give necessary and sufficient conditions for existence
of improving polynomials of higher degrees.

\medskip

\begin{theorem} \label{thm4.1}   The bounds (\ref{bound_odd})  can be improved by a
polynomial from $A_{n,h}$ of degree at least $\tau+1$ if and only if
$Q_j(n,s) < 0$ for some $j \geq \tau+1$. Furthermore, if $h$ is strictly absolutely monotone and $Q_j(n,s)<0$ for
some $j \geq \tau+1$, then (\ref{bound_odd}) can be improved by a polynomial
from $A_{n,h}$ of degree exactly $j$.
\end{theorem}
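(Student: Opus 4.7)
The ``only if'' part follows from a $1/N$-quadrature decomposition in the spirit of Theorem~\ref{THM_subspace_improve}. Given any $g \in A_{n,h}$ of degree at least $\tau+1$, write $g = g^{\le\tau} + \sum_{j>\tau} g_j P_j^{(n)}$ and apply the Levenshtein quadrature (exact on $\mathcal{P}_\tau$) to $g^{\le\tau}$. Substituting $g^{\le\tau}(\alpha_i) = g(\alpha_i) - \sum_{j>\tau} g_j P_j^{(n)}(\alpha_i)$ yields
\[
g_0 - \frac{g(1)}{N} \;=\; \sum_{i=1}^{k} \rho_i\, g(\alpha_i) \;-\; \sum_{j > \tau} g_j\, Q_j(n,s).
\]
Combining $g(\alpha_i) \le h(\alpha_i)$ with $g_j \ge 0$ shows that if $Q_j(n,s) \ge 0$ for every $j > \tau$ then $N^2(g_0 - g(1)/N) \le R_\tau(n,N;h)$, so any improvement of (\ref{bound_odd}) forces $Q_{j_0}(n,s) < 0$ for some $j_0 \ge \tau+1$.

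\textbf{Backward direction---construction of $g_\epsilon$.} Conversely, assume $Q_{j_0}(n,s) < 0$ for some $j_0 \ge \tau+1$. Let $H_{j_0} \in \mathcal{P}_\tau$ be the Hermite interpolant of $P_{j_0}^{(n)}$ at the Levenshtein nodes with the same data conditions as $f$ (cf. (\ref{Hermite_alpha}) or (\ref{Hermite_beta})), and set
\[
g_\epsilon(t) \;:=\; f(t) + \epsilon\bigl(P_{j_0}^{(n)}(t) - H_{j_0}(t)\bigr), \qquad \epsilon > 0,
\]
a polynomial of degree exactly $j_0$. Applying the quadrature (\ref{defin_qf}) to $H_{j_0}\in \mathcal{P}_\tau$ and using $H_{j_0}(\alpha_i)=P_{j_0}^{(n)}(\alpha_i)$, a short computation gives
\[
N\bigl((g_\epsilon)_0 N - g_\epsilon(1)\bigr) \;=\; R_\tau(n,N;h) - N^2 \epsilon\, Q_{j_0}(n,s) \;>\; R_\tau(n,N;h),
\]
so $g_\epsilon$ strictly improves the bound whenever $g_\epsilon \in A_{n,h}$.

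\textbf{Verification and main obstacle.} It remains to check, for sufficiently small $\epsilon > 0$: (a) $g_\epsilon \le h$ on $[-1,1]$, and (b) $(g_\epsilon)_l \ge 0$ for $l \ge 1$. For (a), both $h - f$ and $P_{j_0}^{(n)} - H_{j_0}$ vanish at the Levenshtein nodes with matching multiplicities (double at interior nodes, simple at $-1$ in Case~(i)), so one can factor out the common vanishing polynomial $\omega(t)$ and write $h - g_\epsilon = \omega(t)\bigl(\psi(t)-\epsilon r(t)\bigr)$ with $r$ polynomial and $\psi := (h-f)/\omega$ continuous. The Hermite remainder formula gives $\psi(t) = h^{(m)}(\xi_t)/m!$ for some $\xi_t \in [-1,1]$ where $m$ is the total number of interpolation conditions; strict absolute monotonicity of $h$ then forces $\psi > 0$ uniformly on $[-1,1]$, so $\psi - \epsilon r \ge 0$ on $[-1,1]$ for $\epsilon$ small. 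The main obstacle lies in (b): $(g_\epsilon)_l = f_l - \epsilon (H_{j_0})_l$ for $1 \le l \le \tau$, and since the Gegenbauer coefficients of $H_{j_0}$ are of a priori unknown sign, the perturbation can drive $(g_\epsilon)_l$ below zero precisely when $f_l = 0$. Strict absolute monotonicity of $h$ resolves this by upgrading the Cohn-Kumar non-negativity of $f_l$ (invoked in the proof of Theorem~\ref{thm3.2}) to the strict inequality $f_l > 0$ for all $1 \le l \le \tau$, so small $\epsilon$ preserves non-negativity of every coefficient. This produces the improving polynomial in $A_{n,h}$ of degree exactly $j_0$ required by the second statement, and, in particular, of degree at least $\tau+1$ as required by the ``$\Leftarrow$'' direction of the first statement.
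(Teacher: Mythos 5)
Your proposal is correct, and in fact your improving polynomial is \emph{identical} to the paper's: by linearity of Hermite interpolation, the paper's choice $\epsilon P_j^{(n)}+g$ with $g$ the interpolant of $\tilde{h}:=h-\epsilon P_j^{(n)}$ equals $f+\epsilon\bigl(P_j^{(n)}-H_{j}\bigr)$, i.e.\ your $g_\epsilon$. The necessity argument is also the same (the paper simply cites Theorem~\ref{THM_subspace_improve} with $I=\{\tau+1,\tau+2,\dots\}$, which is exactly your decomposition). Where you genuinely diverge is in verifying $g_\epsilon\in A_{n,h}$. The paper picks $\epsilon$ so small that $\tilde{h}^{(i)}\ge 0$ for $i=0,\dots,j$ (higher derivatives of $P_j^{(n)}$ vanish, so strict absolute monotonicity of $h$ only needs to absorb finitely many inequalities), whence $\tilde{h}$ is absolutely monotone; the Cohn--Kumar machinery already invoked in Theorem~\ref{thm3.2} then gives \emph{both} $g\le\tilde{h}$ (hence $g_\epsilon\le h$) and nonnegativity of the Gegenbauer coefficients of $g$ in one stroke. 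You instead prove $g_\epsilon\le h$ by an explicit factorization through $\omega(t)$ and the Hermite remainder formula (fine, though one should note $\psi\ge h^{(m)}(-1)/m!>0$ uniformly since $h^{(m)}$ is nondecreasing), and you handle the coefficients by asserting that strict absolute monotonicity upgrades $f_l\ge 0$ to $f_l>0$ for $1\le l\le\tau$. That assertion is true --- in the Newton-form expansion of the interpolant in partial products, the degree-$l$ partial product contributes its strictly positive leading Gegenbauer coefficient times a strictly positive divided difference to $f_l$ --- but it is a strengthening of the nonnegativity lemma that you state without proof, and it is precisely the step the paper's device of interpolating the perturbed potential $\tilde{h}$ renders unnecessary. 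So: same polynomial and same quadrature computation, but your verification costs an extra (provable, yet unproved here) strict-positivity input, while the paper's costs only the observation that $\tilde{h}$ is still absolutely monotone.
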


\begin{proof} We give a proof for $\tau=2k-1$.

(Necessity)  The necessity follows from Theorem \ref{THM_subspace_improve} for $I=\{2k,2k+1,\dots \}$.

\smallskip

(Sufficiency) Conversely, assume that $h$ is strictly absolutely monotone and suppose that $Q_j(n,s) <0$ for some $j \geq 2k$.

We shall improve the bound (\ref{bound_odd}) by using the polynomial
\[ f(t)=\epsilon P_j^{(n)}(t)+g(t), \]
where $\epsilon >0$ and $g(t)\in \mathcal{P}_{2k-1}$ will be properly chosen. Denote $\tilde{h}(t):= h(t)-\epsilon P_j^{(n)}(t)$ and select $\epsilon$ such that $\tilde{h}(t)^{(i)}(t) \geq 0$ on $[-1,1]$ for all $i=0,1,\dots,j$. Observe, that for this choice of $\epsilon$ the function $\tilde{h}(t)$ is absolutely monotone. The polynomial $g(t)$  is chosen as the Hermite interpolant of $\tilde{h}$ at the nodes $\{ \alpha_i\}$, i.e.
\[ g(\alpha_i)=\tilde{h}(\alpha_i), \ g^\prime(\alpha_i)=\tilde{h}^\prime(\alpha_i), \ i=1,2,\ldots,k. \]
Since $\tilde{h}(t)$ is an absolutely monotone function, we infer as in Theorem \ref{thm3.2} that $g \in A_{n,\tilde{h}}$, implying that $f\in A_{n,h}$.

Let $g(t)=\sum_{\ell=0}^{2k-1} g_\ell P_\ell^{(n)}(t)$. Note that $f_0=g_0$ and $f(1)=g(1)+\epsilon$.
We next prove that the bound given by $f(t)$ is better that $R_{2k-1}(n,N;h)$. To this end, we
multiply by $\rho_i$ and sum up the first interpolation equalities:
\[ \sum_{i=1}^{k} \rho_i g(\alpha_i)= \sum_{i=1}^{k} \rho_i h(\alpha_i)-\epsilon \sum_{i=1}^{k} \rho_i P_j^{(n)}(\alpha_i). \]
Since $$\sum_{i=1}^{k} \rho_i g(\alpha_i)=g_0-\frac{g(1)}{N}$$ by (\ref{defin_qf}) and
$$\sum_{i=1}^{k} \rho_i P_j^{(n)}(\alpha_i)=Q_j(n,s)-\frac{1}{N} $$
by the definition of the test functions (\ref{test-functions}), we obtain
\[ g_0-\frac{g(1)}{N}=\frac{R_{2k-1}(n,N;h)}{N^2}+\frac{\epsilon}{N}-\epsilon Q_j(n,s) \]
which is equivalent to
\[ N(Ng_0-(g(1)+\epsilon))=R_{2k-1}(n,N;h)-\epsilon N^2Q_j(n,s). \]
Therefore $N(Nf_0-f(1))=R_{2k-1}(n,N;h)-\epsilon N^2Q_j(n,s)>R_{2k-1}(n,N;h)$, i.e. the polynomial $f(t)$ gives
better bound indeed. We also obtained a new bound
\begin{equation}
\label{imp_high_deg}
 W(n,N;h) \geq R_{2k-1}(n,N;h)-\epsilon N^2Q_j(n,s).
\end{equation}

\end{proof}

Theorem 4.1 provides a sufficient condition for solving the infinite linear program \eqref{lp_problem}.

\begin{corollary}
\label{LP-cor}
If $Q_j(n,s) \geq 0$ for all $j>\tau (n,N)$, then $f_{\tau(n,N)}^h (t)$ solves the linear program \eqref{lp_problem}.
\end{corollary}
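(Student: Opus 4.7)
The plan is to reduce the corollary to an infinite-dimensional version of Theorem~\ref{THM_subspace_improve} with $\Lambda=\mathcal{P}_{\tau(n,N)}$ and $I=\{\tau(n,N)+1,\tau(n,N)+2,\ldots\}$. Since $f_{\tau(n,N)}^h$ is feasible for \eqref{lp_problem} and achieves the objective value $R_{\tau(n,N)}(n,N;h)$ by Theorem~\ref{thm3.2}, it suffices to show that every feasible $f$ for \eqref{lp_problem} satisfies $N(Nf_0-f(1))\le R_{\tau(n,N)}(n,N;h)$.

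First, I would fix an arbitrary feasible $f(t)=\sum_{k=0}^\infty f_k P_k^{(n)}(t)$ for the program \eqref{lp_problem}, i.e.\ $f\in A_{n,h}$ with $(f_k)\in\ell_1$ and $f_k\ge 0$ for $k\ge 1$, and split it as $f=g+r$, where $g(t):=\sum_{k=0}^{\tau}f_k P_k^{(n)}(t)\in\mathcal{P}_\tau$ and $r(t):=\sum_{j>\tau}f_j P_j^{(n)}(t)$. Because $|P_j^{(n)}(t)|\le 1$ and $(f_k)\in\ell_1$, the series defining $r$ converges absolutely and uniformly on $[-1,1]$; in particular $r(1)=\sum_{j>\tau}f_j$, and I may interchange the infinite sum with the finite sum $\sum_{i=1}^k\rho_i(\cdot)$ appearing below.

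Next I would apply exactly the computation in the proof of Theorem~\ref{THM_subspace_improve}, but with Levenshtein's $1/N$-quadrature \eqref{defin_qf}, which is exact on $\mathcal{P}_\tau$. Using $f_0=g_0$, $f(1)=g(1)+\sum_{j>\tau}f_j$, and $g_0-g(1)/N=\sum_{i=1}^k\rho_i g(\alpha_i)$, a term-by-term rearrangement gives
\begin{equation*}
f_0-\frac{f(1)}{N}=\sum_{i=1}^k\rho_i f(\alpha_i)-\sum_{j>\tau}f_j\Bigl(\frac{1}{N}+\sum_{i=1}^k\rho_i P_j^{(n)}(\alpha_i)\Bigr)=\sum_{i=1}^k\rho_i f(\alpha_i)-\sum_{j>\tau}f_j Q_j(n,s).
\end{equation*}
Multiplying by $N^2$ and using the hypothesis $Q_j(n,s)\ge 0$ for $j>\tau$ together with $f_j\ge 0$, and the feasibility inequality $f(\alpha_i)\le h(\alpha_i)$, yields
\begin{equation*}
N(Nf_0-f(1))\le N^2\sum_{i=1}^k\rho_i h(\alpha_i)=R_{\tau(n,N)}(n,N;h).
\end{equation*}
Since equality is attained by $f_{\tau(n,N)}^h$, this polynomial solves \eqref{lp_problem}.

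The only substantive obstacle is the passage from the finite-dimensional argument of Theorem~\ref{THM_subspace_improve} to the full LP \eqref{lp_problem}; the $\ell_1$-summability of the Gegenbauer coefficients (noted immediately after \eqref{fform}) handles the required interchange of sums, so the extension is routine. Everything else is an immediate bookkeeping consequence of exactness of the Levenshtein quadrature on $\mathcal{P}_\tau$, the definition \eqref{test-functions} of $Q_j(n,s)$, and the optimality of $f_{\tau(n,N)}^h$ within $\mathcal{P}_\tau$ already established in Theorem~\ref{thm3.2}.
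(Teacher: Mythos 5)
Your proposal is correct and follows essentially the same route as the paper: the corollary is obtained by combining the optimality of $f_{\tau(n,N)}^h$ on $\mathcal{P}_\tau$ from Theorem~\ref{thm3.2} with the decomposition argument of Theorem~\ref{THM_subspace_improve} applied to the infinite index set $I=\{\tau+1,\tau+2,\ldots\}$, which is exactly how the necessity part of Theorem~\ref{thm4.1} is established. Your explicit justification of the interchange of sums via the $\ell_1$-summability of the Gegenbauer coefficients is a detail the paper leaves implicit, but it is the right one.
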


\subsection{Investigation of the test functions}

The test functions (\ref{test-functions}) coincide with the functions with the same name which were
introduced and investigated in 1996 by Boyvalenkov, Danev and Bumova \cite{BDD}. More details and all proofs are given in
the dissertations \cite{BoumovaPhD} and \cite{Boy_dis}. We cite some results
from \cite{BDD,BoumovaPhD,Boy_dis} with only reformulations for energy bounds.

\begin{theorem}[\cite{BoumovaPhD}, \cite{Boy_dis},  \cite{BDD}] \label{thm5.1}
The bounds $R_\tau(n,N;h)$ cannot be improved by using polynomials of degrees $\tau +1$
and $\tau +2$.
\end{theorem}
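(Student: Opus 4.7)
The plan is to apply Theorem \ref{thm4.1}, reducing the claim to showing that $Q_{\tau+1}(n,s) \geq 0$ and $Q_{\tau+2}(n,s) \geq 0$ for every $s \in \mathcal{I}_\tau$ with $L_\tau(n,s) = N$. Since the test functions defined in \eqref{test-functions} depend only on $n$, $N$, and the Levenshtein quadrature data, and not on the potential $h$, the problem is identical to the positivity problem already addressed in the context of maximal spherical codes.

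The first step I would take is to rewrite $Q_j(n,s) = -E(P_j^{(n)})$, where $E(f) := f_0 - f(1)/N - \sum_{i=1}^k \rho_i f(\alpha_i)$ is the quadrature error functional, which annihilates $\mathcal{P}_\tau$ by \eqref{defin_qf}. Thus the task becomes $E(P_j^{(n)}) \leq 0$ for $j = \tau+1, \tau+2$. The structural input comes from the factorization of the Levenshtein polynomial through the nodal polynomial $\pi(t) := \prod_{i=1}^k (t - \alpha_i)$: in case (ii), $(t-s)\,f_\tau^{(n,s)}(t) = C\,\pi(t)^2$ for some $C > 0$, and in case (i) the analogous identity holds with an extra factor $(t+1)$. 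In either case $\pi^2 \in \mathcal{P}_{\tau+1}$, so I would decompose $P_{\tau+1}^{(n)} = c\,\pi^2 + r_1$ and $P_{\tau+2}^{(n)} = (at+b)\pi^2 + r_2$ with $r_1, r_2 \in \mathcal{P}_\tau$. Because $\pi(\alpha_i) = 0$, the quadrature side of $E(q\pi^2)$ collapses to the single term $q(1)\,\pi(1)^2 / N$, while the Gegenbauer coefficient $(q\pi^2)_0$ can be rewritten as an integral of $q(t)(t-s) f_\tau^{(n,s)}(t)$ against the weight $(1-t^2)^{(n-3)/2}$ (with the analogous expression in case (i)).

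The main obstacle will be establishing that the resulting moment inequalities have the desired sign. Here the defining equation $N = L_\tau(n,s)$ is indispensable: combining the Levenshtein identity $f_\tau^{(n,s)}(1)/(f_\tau^{(n,s)})_0 = N$, the non-negativity of all Gegenbauer coefficients of $f_\tau^{(n,s)}$, the three-term recurrence among the $P_j^{(n)}$, and the interlacing information on the zeros of the adjacent Jacobi polynomials $P_i^{((n-1)/2, (n-3)/2)}$ encoded in \eqref{op_eq1}--\eqref{op_eq}, the sign conditions on $E(\pi^2)$ and $E(t\pi^2)$ reduce to explicit inequalities among the first few Gegenbauer coefficients of $f_\tau^{(n,s)}$. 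These inequalities were carried out in \cite{BDD, BoumovaPhD, Boy_dis} for the maximal code problem, and since the $Q_j(n,s)$ considered here coincide with the test functions studied there, the arguments transfer to the energy setting without modification.
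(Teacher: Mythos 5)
Your proposal is correct and takes essentially the same route as the paper, which gives no proof of this statement but simply cites \cite{BDD}, \cite{BoumovaPhD}, \cite{Boy_dis} for the nonnegativity of $Q_{\tau+1}(n,s)$ and $Q_{\tau+2}(n,s)$; your quadrature-error reformulation and the factorization of $(t-s)f_\tau^{(n,s)}(t)$ through the squared nodal polynomial is a faithful sketch of how those references establish that nonnegativity. One small precision: the degree-restricted conclusion follows from Theorem \ref{THM_subspace_improve} applied with $I=\{\tau+1,\tau+2\}$ rather than from the blanket statement of Theorem \ref{thm4.1}, whose necessity direction concerns improvement by some polynomial of degree at least $\tau+1$ and hence the full family $\{Q_j\}_{j\ge\tau+1}$ at once.
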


Set $k_1(n):=\sqrt{n-2}$ and let $k_2(n) \geq 9$ be such that
\[ 4n \leq k_2(n)^2-4k_2(n)+5+\sqrt{k_2(n)^4-8k_2(n)^3-6k_2(n)^2+24k_2(n)+25}. \]
Then we have the following theorems.

\begin{theorem} \label{thm5.2}    a) {\rm \cite[Theorem 3.5.15]{BoumovaPhD}, \cite[Theorem 3.4.12]{Boy_dis}}
If $n \geq 3$ and $k \geq k_1(n)$,
then all bounds $R_{2k}(n,N;h)$ corresponding to $s$ in the open interval ${\mathcal I}_{2k}$
can be improved by polynomials of degree $2k+3$.

b) {\rm  \cite[Theorem 3.5.9]{BoumovaPhD}, \cite[Theorem 3.4.14]{Boy_dis}} If $n \geq 3$ and $k \geq k_2(n)$,
then all bounds $R_{2k-1}(n,N;h)$ corresponding to $s$ in the open interval ${\mathcal I}_{2k-1}$
can be improved by polynomials of degree $2k+3$.
\end{theorem}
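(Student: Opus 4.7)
The plan is to apply Theorem \ref{thm4.1}: it suffices in both cases to produce a single $j\ge\tau+1$ (with $\tau=2k$ in (a) and $\tau=2k-1$ in (b)) for which $Q_j(n,s)<0$ throughout the open interval $\mathcal{I}_\tau$. The statement singles out $j=2k+3$, so the whole argument reduces to a sign analysis of the one test function $Q_{2k+3}(n,s)$ as $s$ varies over $\mathcal{I}_\tau$. Theorem \ref{thm5.1} explains why we are forced to jump past $j=\tau+1,\tau+2$; the choice $j=2k+3$ is then the first degree that has a realistic chance of delivering negativity in both parity regimes simultaneously.

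First I would derive a closed form for $Q_{2k+3}(n,s)$. The Radau/Lobatto quadrature \eqref{defin_qf} is exact on $\mathcal{P}_\tau$, and its nodes $\alpha_1,\dots,\alpha_k$ are precisely the zeros of the explicit Jacobi combinations in \eqref{op_eq1} and \eqref{op_eq}. Performing polynomial division of $P_{2k+3}^{(n)}$ by the node polynomial (equivalently, applying the Christoffel--Darboux identity for the relevant Jacobi family) rewrites $\sum_i\rho_i P_{2k+3}^{(n)}(\alpha_i)$ as a rational function of values of consecutive Jacobi polynomials at $s$ and at $1$. Combining with $1/N=1/L_\tau(n,s)$ through \eqref{L_bnd} then yields an explicit closed form for $Q_{2k+3}(n,s)$ as a rational function of $s$, with all factors expressible via $P_{k-1},P_k,P_{k+1}$ of the Jacobi polynomials appearing in \eqref{op_eq1}--\eqref{op_eq}.

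Next I would extract the sign-determining factor. By \eqref{L-DGS1}, at the endpoints of $\mathcal{I}_\tau$ one has $N=D(n,\tau\pm 1)$ and the quadrature \eqref{defin_qf} coincides with one of higher exactness; consequently $Q_{2k+3}(n,s)$ is forced to vanish at each endpoint. Dividing out these boundary zeros, whose residual factors are positive by the interlacing of Jacobi zeros recorded in Subsection~2.3, reduces $Q_{2k+3}(n,s)<0$ to an algebraic inequality in $n$ and $k$ (after further estimating the remaining $s$-dependence by the monotonicity properties listed after \eqref{op_eq}). In case (a) the resulting inequality is quadratic in $k$ with positive root $\sqrt{n-2}=k_1(n)$; in case (b) it is quartic in $k$ and the required threshold is precisely the quantity $k_2(n)$ defined in the statement, whence the condition $4n\le k_2(n)^2-4k_2(n)+5+\sqrt{k_2(n)^4-8k_2(n)^3-6k_2(n)^2+24k_2(n)+25}$.

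The main obstacle is uniformity of the sign across the entire open subinterval. Since both the nodes $\alpha_i(s)$ and the weights $\rho_i(s)$ move with $s$ and $P_{2k+3}^{(n)}$ oscillates several times on $[-1,1]$, the cancellations in $\sum_i\rho_i P_{2k+3}^{(n)}(\alpha_i)$ must be tracked by sharp estimates, not just at an interior point. The hypotheses $k\ge k_1(n)$ (resp.\ $k\ge k_2(n)$) are exactly what is needed so that, after stripping out the boundary zeros described above, the residual algebraic factor stays strictly of the desired sign on all of $\mathcal{I}_\tau$. Carrying out this final estimation quantitatively, in the style developed in \cite{BDD,BoumovaPhD,Boy_dis}, delivers the claim.
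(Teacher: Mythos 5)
The first thing to note is that the paper does not actually prove this theorem: it is quoted, with only a reformulation from maximal codes to energy bounds, from \cite{BDD}, \cite{BoumovaPhD} and \cite{Boy_dis}, so there is no in-paper argument to compare yours with. Your opening reduction is nonetheless the correct one and matches what the surrounding text (the proof of Theorem \ref{thm4.5}) makes explicit: the entire content of the statement is the inequality $Q_{2k+3}(n,s)<0$ on the open interval $\mathcal{I}_\tau$, after which Theorem \ref{thm4.1} finishes the job (note that the sufficiency direction of Theorem \ref{thm4.1} requires $h$ to be \emph{strictly} absolutely monotone, so that an $\epsilon>0$ with $h-\epsilon P_{2k+3}^{(n)}$ still absolutely monotone exists; your sketch does not record this hypothesis).

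The gap is that everything after this reduction is a plan rather than a proof. The decisive steps --- the closed form of $Q_{2k+3}(n,s)$, the identification of its sign-determining factor, and the emergence of the thresholds $k_1(n)=\sqrt{n-2}$ and $k_2(n)$ --- are asserted (``reduces to an algebraic inequality,'' ``the required threshold is precisely the quantity $k_2(n)$'') but never carried out; the shape of the final inequalities is read off from the statement being proved rather than derived. One intermediate claim is also unjustified as stated: you argue that $Q_{2k+3}$ must vanish at the endpoints of $\mathcal{I}_\tau$ because the quadrature there ``coincides with one of higher exactness.'' At a transition point $N=D(n,\tau+1)$ the limiting quadrature is exact only on $\mathcal{P}_{\tau+1}$ (it is the rule of the adjacent Levenshtein interval), which forces $Q_{\tau+1}$ to vanish there but says nothing about the defect on $P_{2k+3}^{(n)}$; if $Q_{2k+3}$ does vanish at the endpoints, that must come out of the explicit formula, not out of exactness. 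Finally, the parity bookkeeping deserves care: in case (b) one has $\tau=2k-1$, so $2k+3=\tau+4$, and Theorem \ref{thm5.1} only excludes degrees $\tau+1,\tau+2$ --- the degree $\tau+3=2k+2$ is skipped for a reason your sketch does not address. To close the argument one needs the explicit Christoffel--Darboux-type evaluation of $\sum_i\rho_iP_{2k+3}^{(n)}(\alpha_i)$ and the sign analysis from \cite{BDD,BoumovaPhD,Boy_dis}; as written, the proposal reproduces the strategy of those references without supplying their computations.
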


\begin{theorem} \label{thm4.5}    a) If $n \geq 3$ and $k \geq k_1(n)$, then
\begin{equation}
\label{imp_odd_high_deg_m+4}
\mathcal{E} (n,N;h) \geq R_{2k-1}(n,N;h)-\epsilon N^2Q_{2k+3}(n,s).
\end{equation}
for every $N \in (D(n,2k-1),D(n,2k))$ where $\epsilon$ is chosen as in Theorem \ref{thm4.1}.

b) If $n \geq 3$ and $k \geq k_2(n)$, then
\begin{equation}
\label{imp_even_high_deg_m+3}
\mathcal{E} (n,N;h) \geq R_{2k}(n,N;h)-\epsilon N^2Q_{2k+3}(n,s).
\end{equation}
for every $N \in (D(n,2k),D(n,2k+1))$ where $\epsilon$ is chosen as in Theorem \ref{thm4.1}.
\end{theorem}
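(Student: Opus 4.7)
The plan is to derive Theorem \ref{thm4.5} as a direct combination of Theorem \ref{thm4.1} with Theorem \ref{thm5.2}. The former provides the explicit quantitative improvement (\ref{imp_high_deg}) whenever some test function $Q_j(n,s)$ is negative, while the latter guarantees that $Q_{2k+3}(n,s)<0$ under the relevant hypotheses on $n,k,s$.

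First I would translate Theorem \ref{thm5.2} into the language of test functions. The necessity direction of Theorem \ref{thm4.1} states that $R_\tau(n,N;h)$ can be improved by a polynomial of degree $j\ge \tau+1$ only if $Q_j(n,s)<0$. Applied contrapositively to the assertions of Theorem \ref{thm5.2}, this yields that for $n\ge 3$ and $k\ge k_1(n)$ (respectively $k\ge k_2(n)$), the inequality $Q_{2k+3}(n,s)<0$ holds for every $s$ in the open interval $\mathcal{I}_{2k-1}$ (respectively $\mathcal{I}_{2k}$). Equivalently, by (\ref{tauNn}) and (\ref{Lns_eq}), this covers every cardinality $N\in (D(n,2k-1),D(n,2k))$ in case (a) and every $N\in (D(n,2k),D(n,2k+1))$ in case (b).

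Next I would invoke the sufficiency construction carried out in the proof of Theorem \ref{thm4.1} with the index $j=2k+3$. That construction produces, for any strictly absolutely monotone potential $h$ and any $\epsilon>0$ small enough that $\tilde h(t):=h(t)-\epsilon P_{2k+3}^{(n)}(t)$ remains absolutely monotone on $[-1,1]$, a polynomial
\[
f(t)=\epsilon P_{2k+3}^{(n)}(t)+g(t)\in A_{n,h},
\]
where $g$ is the Hermite interpolant of $\tilde h$ at the Levenshtein nodes $\{\alpha_i\}_{i=1}^{k}$. Running the same Gegenbauer-expansion identity used to derive (\ref{imp_high_deg}) — namely multiplying the interpolation equalities by $\rho_i$ and summing, then substituting (\ref{defin_qf}) and the definition (\ref{test-functions}) of $Q_{2k+3}(n,s)$ — yields
\[
N(Nf_0-f(1))=R_\tau(n,N;h)-\epsilon N^2 Q_{2k+3}(n,s),
\]
so that Theorem \ref{thm1} gives exactly the claimed bounds (\ref{imp_odd_high_deg_m+4}) and (\ref{imp_even_high_deg_m+3}). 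Since $Q_{2k+3}(n,s)<0$, this is a genuine improvement of $R_\tau(n,N;h)$.

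I do not anticipate any significant obstacle: essentially all the work lies in establishing the sign statement $Q_{2k+3}(n,s)<0$, and that is already encapsulated in Theorem \ref{thm5.2}, whose proof is deferred to the dissertations \cite{BoumovaPhD,Boy_dis}. The only thing one must be careful about is that Theorem \ref{thm4.1} requires $h$ to be strictly absolutely monotone in order to pick a positive $\epsilon$ with $\tilde h$ still absolutely monotone; under the blanket hypothesis of Theorem \ref{thm3.2} that $h$ is absolutely monotone, one may need to add this strictness assumption explicitly (or note, as elsewhere in the paper, that it can be arranged by a limiting perturbation). Once that is in place, the theorem follows by concatenating the two cited results and reading off the quantitative bound from (\ref{imp_high_deg}).
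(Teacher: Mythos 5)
Your proposal follows the paper's proof: the displayed inequalities come from running the sufficiency construction of Theorem \ref{thm4.1} with $j=2k+3$, which gives the identity $N(Nf_0-f(1))=R_\tau(n,N;h)-\epsilon N^2Q_{2k+3}(n,s)$ and hence \eqref{imp_high_deg}, while Theorem \ref{thm5.2} supplies $Q_{2k+3}(n,s)<0$ so that this is a genuine improvement; your caveat about needing strict absolute monotonicity of $h$ in order to choose $\epsilon>0$ is also apt. One logical point deserves attention: you obtain $Q_{2k+3}(n,s)<0$ from the \emph{statement} of Theorem \ref{thm5.2} by contraposing the necessity direction of Theorem \ref{thm4.1}, but that direction only yields $Q_j(n,s)<0$ for \emph{some} $j\ge\tau+1$; even after excluding $j=\tau+1,\tau+2$ via Theorem \ref{thm5.1}, in the case $\tau=2k-1$ (where $2k+3=\tau+4$) one cannot rule out that only $Q_{\tau+3}$ is negative. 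The paper instead appeals to the fact that the cited proofs of Theorem \ref{thm5.2} in \cite{BoumovaPhD,Boy_dis} establish the inequality $Q_{2k+3}(n,s)<0$ directly. This does not invalidate your proof of the inequalities \eqref{imp_odd_high_deg_m+4} and \eqref{imp_even_high_deg_m+3} themselves, since the identity above holds irrespective of the sign of $Q_{2k+3}$, but it does matter for the implicit claim that they strictly improve on $R_\tau(n,N;h)$.
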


{\it Proof.} This follows from (\ref{imp_high_deg}) and the fact that Theorem \ref{thm5.2} is based on the inequality
$Q_{2k+3}(n,s)<0$ which holds true for the mentioned values of $n$ and $\tau$.  \hfill $\Box$

\medskip

Another application of Theorem \ref{thm5.2}  concerns the sharp configurations.
Recall that a sharp configuration is a maximal spherical
$(n,L_{2k-1}(n,s),s)$ =(di\-men\-sion, cardinality, maximal cosine) code; i.e. a code that
attains the odd Levenshtein bound $L_{2k-1}(n,s)$ (cf. \cite{Lev3}).
In fact, the next corollary is implicit in \cite{BDL} and follows from the main result of \cite{MW} as well.

\begin{corollary}
 For any fixed dimension $n \geq 3$ only finitely many sharp configurations are possible.
 \end{corollary}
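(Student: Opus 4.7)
The plan is to exploit the fact that a sharp configuration forces equality in the universal lower bound of Theorem~\ref{thm3.2}, then use Theorem~\ref{thm4.1} to constrain the test functions and Theorem~\ref{thm5.2}(b) to bound the parameter $k$. Fix $n\ge 3$ and suppose $C\subset\Sp^{n-1}$ is a sharp configuration with $k$ distinct inner products $\alpha_1<\cdots<\alpha_k$ and cardinality $N=L_{2k-1}(n,\alpha_k)$. By Remark~\ref{Rmk3.3}, the $\alpha_i$ coincide with the Levenshtein quadrature nodes and $C$ attains the ULB, so $E(C;h)=R_{2k-1}(n,N;h)$ for every absolutely monotone $h$. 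Combined with $\mathcal{E}(n,N;h)\ge R_{2k-1}(n,N;h)$ from Theorem~\ref{thm3.2} and the trivial inequality $\mathcal{E}(n,N;h)\le E(C;h)$, this yields $\mathcal{E}(n,N;h)=R_{2k-1}(n,N;h)$.

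Next, I would apply the sufficiency half of Theorem~\ref{thm4.1}. Were there some $j\ge 2k$ with $Q_j(n,\alpha_k)<0$, then for a strictly absolutely monotone potential $h$ (e.g., a Riesz $\alpha$-potential) the ULB would admit a strict improvement via a polynomial of degree $j$, forcing $\mathcal{E}(n,N;h)>R_{2k-1}(n,N;h)$ and contradicting the identity just derived. Hence every sharp configuration must satisfy $Q_j(n,\alpha_k)\ge 0$ for all $j\ge 2k$.

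Theorem~\ref{thm5.2}(b) now does the heavy lifting: for $n\ge 3$, $k\ge k_2(n)$, and $\alpha_k$ in the open interval $(t_{k-1}^{1,1},t_k^{1,0})$ one has $Q_{2k+3}(n,\alpha_k)<0$, contradicting the non-negativity just established. So every sharp configuration either has $k<k_2(n)$, or $\alpha_k$ sits at an endpoint of $\mathcal{I}_{2k-1}$. In the endpoint case, \eqref{L-DGS1} gives $N=D(n,2k-1)$ or $N=D(n,2k)$, making $C$ a tight spherical $(2k-1)$- or $(2k)$-design; by the classical Bannai--Damerell theorems (alternatively, the main theorem of \cite{MW}), such tight designs exist only for finitely many $k$ for each fixed $n\ge 3$. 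In the interior case, $k$ ranges over the finite set $\{1,\dots,k_2(n)-1\}$, and for each such $k$ the cardinality $N$ lies in the finite integer set $[D(n,2k-1),D(n,2k)]\cap\mathbb{Z}$, with $N$ determining $\alpha_k$ uniquely through $N=L_{2k-1}(n,\alpha_k)$. Only finitely many parameter triples $(n,N,k)$ therefore support a sharp configuration, and for each such triple the rigidity of the combinatorial data (fixed cardinality, fixed inner-product set, design condition) leaves only finitely many realizations up to isometry.

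The main obstacle is the boundary analysis: Theorem~\ref{thm5.2}(b) controls $Q_{2k+3}(n,s)$ only on the open Levenshtein interval, so one must close the gap at $s\in\{t_{k-1}^{1,1},t_k^{1,0}\}$. The cleanest way is to invoke the external Bannai--Damerell-type classification of tight spherical designs, as above; a self-contained alternative, in the spirit of \cite{BDD}, would be to show directly that some test function $Q_j(n,\cdot)$ becomes negative at the endpoints for $k$ sufficiently large, which is essentially the approach implicit in \cite{BDL} and formalized in \cite{MW}.
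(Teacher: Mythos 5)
Your argument is correct and follows the paper's own proof in all essentials: the negativity of $Q_{2k+3}(n,s)$ from Theorem~\ref{thm5.2} rules out large $k$ on the open Levenshtein intervals, the endpoints are dispatched by the Bannai--Damerell classification of tight spherical designs, and each remaining interval contains only finitely many admissible cardinalities $N$. The only cosmetic difference is that you reach the contradiction through improvability of the energy ULB (via Remark~\ref{Rmk3.3} and the sufficiency half of Theorem~\ref{thm4.1}) rather than through improvability of the Levenshtein cardinality bound itself; both rest on the same test-function inequality, so this is essentially the paper's proof.
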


{\it Proof.} Theorem  \ref{thm5.2} implies that in every fixed dimension
$n \geq 3$ every Levenshtein bound $L_{2k-1}(n,s)$
can be improved in the whole open interval $\left( t_k^{1,0}, t_k^{1,1} \right)$
provided $k$ is large enough. The remaining end points correspond to
tight spherical designs, which means (among many other things) that $k \leq 6$ \cite{BD1,BD2}.
This leaves only finitely many possible intervals $\mathcal{I}_{2k-1}$
where the Levenshtein bound $L_{2k-1}(n,s)$ can be attained. Every such interval contains finitely many $s$,
corresponding to cardinalities $N$, which completes the proof. \hfill $\Box$

We complete the subsection with the following conjecture,
based on the above results and numerous investigations of the test functions as
related to maximal spherical codes.

\begin{conjecture}
If $Q_j (n,s)\geq 0$ for $j=\tau (n,N)+3$ and $\tau (n,N)+4$, then $Q_j (n,s)\geq 0$ for all $j>\tau (n,N)$.
\end{conjecture}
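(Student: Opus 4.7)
The plan is to prove the conjecture by controlling the sign pattern of $Q_j(n,s)$ as a sequence in the index $j$. By Theorem~\ref{thm5.1}, one already has $Q_{\tau+1}(n,s)\ge 0$ and $Q_{\tau+2}(n,s)\ge 0$; combined with the hypotheses, the four initial values $Q_{\tau+1},\,Q_{\tau+2},\,Q_{\tau+3},\,Q_{\tau+4}$ are all non-negative, and the task is to show that this propagates to every $j\ge \tau+5$.

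First I would derive a usable closed form for $Q_j(n,s)$. The sum $\sum_{i=1}^{k}\rho_i P_j^{(n)}(\alpha_i)$ is exactly the $1/N$-quadrature approximation to $(P_j^{(n)})_0$ minus the endpoint contribution $P_j^{(n)}(1)/N=1/N$, so that $Q_j(n,s)$ is literally the quadrature residual of $P_j^{(n)}$. Since the nodes $\alpha_i$ are the zeros of Levenshtein's polynomial $f_\tau^{(n,s)}$ and the $\rho_i$ are of Christoffel type, a Christoffel-Darboux identity should rewrite $Q_j(n,s)$ as a rational expression whose numerator is built from the adjacent Jacobi polynomials $P_{m}^{(\frac{n-1}{2},\frac{n-3}{2})}(s)$ or $P_{m}^{(\frac{n-1}{2},\frac{n-1}{2})}(s)$ for appropriate $m$ depending on $j$ and the parity of $\tau$.

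Next I would exploit the three-term recurrence for the Gegenbauer polynomials $P_j^{(n)}$, evaluated at each $\alpha_i$ and summed against $\rho_i$, to derive a coupled recurrence in $j$ for $Q_j(n,s)$ together with the auxiliary sequence $R_j(n,s):=\sum_{i=1}^{k}\rho_i\alpha_iP_j^{(n)}(\alpha_i)$. Eliminating $R_j$ yields a linear recurrence for $Q_j$ alone, and the goal is to verify that its coefficients enforce at most four sign changes of the sequence $(Q_j)_{j>\tau}$. A Sturm-type oscillation bound of this kind would immediately give the conjecture: the four initial non-negative values would exhaust all possible sign transitions and force $Q_j(n,s)\ge 0$ for every subsequent~$j$.

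The main obstacle I anticipate is precisely this uniform sign-change bound. The partial negativity results collected in Theorem~\ref{thm5.2} only guarantee $Q_{2k+3}(n,s)<0$ when $k$ is large relative to $n$, which indicates genuinely different behavior in a finite small-$k$ regime. A complete proof would most likely have to combine an asymptotic analysis, using classical uniform estimates for Jacobi polynomials to treat $j$ large, with a finite (possibly computer-assisted) verification over the remaining parameter pairs $(n,k)$. A more conceptual route would be to establish a total-positivity property for the matrix $\bigl(P_j^{(n)}(\alpha_i)\bigr)_{j\ge 0,\;1\le i\le k}$, which would propagate non-negativity automatically from any four consecutive values of~$Q_j$.
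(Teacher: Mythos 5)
This statement is posed in the paper as a \emph{conjecture}: the authors explicitly offer no proof, only the partial results of Theorems~\ref{thm5.1} and~\ref{thm5.2} and numerical evidence, so there is no argument of theirs to compare yours against. The real question is whether your proposal closes the gap, and it does not. The decisive step --- that a recurrence in $j$ for $Q_j(n,s)$ ``enforces at most four sign changes'' of the sequence $(Q_j)_{j>\tau}$ --- is asserted, not proved, and you yourself identify it as the main obstacle. Moreover, even granting such a Sturm-type bound, the deduction you draw from it is logically invalid: the four consecutive non-negative values $Q_{\tau+1},\dots,Q_{\tau+4}$ contain \emph{no} sign changes among themselves, so they do not ``exhaust'' any allotted transitions; the sequence could still turn negative at $j=\tau+5$ and spend its sign changes afterwards. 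A bound on the number of sign changes says nothing about \emph{where} they occur, which is what the conjecture requires.

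There is also concrete evidence against the existence of a small uniform oscillation bound. For the $600$-cell, $(n,N)=(4,120)$, one has $\tau=11$ and the paper records $Q_{14},Q_{15},Q_{17}<0$ with the neighbouring values non-negative; the sign pattern $+,+,-,-,+,-,+,\dots$ already exhibits four sign changes within the first few indices beyond $\tau$, so any recurrence-based argument would have to permit at least this much oscillation, at which point it can no longer pin the non-negativity of the tail to the hypothesis on $Q_{\tau+3}$ and $Q_{\tau+4}$. Your fallback routes (large-$j$ asymptotics via Jacobi estimates, which is essentially what Theorem~\ref{Qposthm} already provides, plus finite verification over the remaining $(n,k)$) reduce to the case-by-case computations the authors already carry out for specific codes and cannot yield the uniform statement, since the conjecture quantifies over infinitely many pairs $(n,N)$. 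In short, this is a plausible research programme rather than a proof, and its central step is both unproven and, as formulated, insufficient to imply the conclusion.
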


\subsection{Test functions and LP universality}

We now apply the test functions to the study of universal configurations.

\begin{defn} A spherical code $C\subset \mathbb{S}^{n-1}$ of cardinality $|C|=N$ is called  {\it LP-universally optimal} if
\[ E(C;h)=\mathcal{W}(n,N,\mathcal{P};h), \quad {\rm \ for\ all\ absolutely\ monotone} \quad h,\]
where $\mathcal{P}$ is the subspace of polynomials.
\end{defn}

\begin{remark}
Observe that from \eqref{EKlowbound} and \eqref{Wdef} one infers that LP-universally optimal codes are in fact universally optimal. If the conjecture in Ballinger et al \cite{BBCGKS} is true, then Theorem \ref{LastThm} implies that the converse does not hold.
\end{remark}

We derive a criterion for positivity of test functions of large enough $j$ that can be used for proving that certain spherical codes of given dimension $n$ and cardinality $N$ are not LP-universally optimal. We utilize $(n,N)$ to denote\footnote{We note that \cite{BBCGKS} uses $(N,n)$ notation instead.} codes $C \subset \mathbb{R}^n$ with cardinality $| C |=N$.
As examples, the cases $(n,N)=(10,40)$, $(14,64)$ and $(15,128)$ are analyzed.

Sharp estimations for Gegenbauer polynomials can be derived from \cite{EMN} (see also
\cite{Kra}). In \cite[Theorem 1]{EMN} the following inequality is given
\begin{equation}
\label{Ineq1}
\max_{t\in[-1,1]}\sqrt{1-t^2}w(t)p_j^2 (t)\leq \frac{2e(2+\sqrt{\alpha^2+\beta^2})}{\pi},
\end{equation}
where $\{ p_j (t)\}$ are the orthonormal Jacobi polynomials with weight $w(t)=(1-t)^\alpha(1+t)^\beta$. Utilizing $\alpha=\beta=\frac{n-3}{2}$ to get Gegenbauer polynomials and the normalization $P_j^{(n)}(1)=1$, we rewrite \eqref{Ineq1} as
\begin{equation}
\label{Ineq3}
|P_j^{(n)} (t)|
\leq \frac{\Gamma\left( \frac{n-1}{2} \right) }{(1-t^2)^{(n-2)/4}}
\sqrt{\frac{2^{n-2} e(4+(n-3)\sqrt{2})\, j!}{\pi(2j+n-2)\, (j+n-3)!}},
\end{equation}
where $\Gamma(x)$ is the Gamma function \cite{Wat}. Note that for every fixed $n\geq 3$ and $t\in(-1,1)$ the right-hand side of \eqref{Ineq3} is strictly monotone decreasing in $j$.

Let $k$, $\alpha_1$, $\alpha_2$ and $\rho_1$ be as in Theorem~\ref{thm3.2}. Denote by
$j_0(n,N)$ the smallest degree $j>\tau(n,N)$ such that the right hand side of
\eqref{Ineq3} is less than $\frac{1}{N-1}$ when
\begin{equation}
\label{def_t_alpha}
t=\begin{cases} \alpha_1 &  \text{if $\alpha_1>-1$},\\
\alpha_2 & \text{if $\alpha_1=-1$ and $\rho_1<\frac{1}{N}$},
\end{cases}
\end{equation}
or less than $\frac{2}{N-2}$ when $t=\alpha_2$ if $\alpha_1=-1$ and $\rho_1=1/N$.

\begin{theorem}
\label{Qposthm}
Let $n\ge 3$, $N\ge 2$, and let $k$, $\alpha_1$, $\alpha_2$ and $\rho_1$ be as in Theorem~\ref{thm3.2}.
Then $Q_j(n,\alpha_k)\ge 0$ for all $j\ge j_0(n,N)$.
\end{theorem}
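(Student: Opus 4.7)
The plan is to control $Q_j(n,\alpha_k)=\frac{1}{N}+\sum_{i=1}^{k}\rho_i P_j^{(n)}(\alpha_i)$ by combining three ingredients: the quadrature normalization, the pointwise estimate \eqref{Ineq3}, and the node-ordering of Levenshtein's quadrature recorded after equations \eqref{op_eq1}--\eqref{op_eq}. Applying \eqref{defin_qf} to $f\equiv 1$ (for which $f_0=f(1)=1$) immediately yields $\sum_{i=1}^{k}\rho_i=(N-1)/N$. Denote by $B_j(t)$ the right-hand side of \eqref{Ineq3}, so that $|P_j^{(n)}(t)|\le B_j(t)$ on $(-1,1)$; as noted in the paper, $B_j(t)$ is monotone increasing in $|t|$ on the open interval and monotone decreasing in $j$. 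The latter monotonicity lets me verify the inequality at the threshold $j=j_0$ only and then propagate it to all $j\ge j_0$.

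Next I would carry out the case split dictated by the definition of $j_0(n,N)$. In the Radau regime $\alpha_1>-1$, the paper's chain $|\alpha_1|>|\alpha_k|>|\alpha_2|>\cdots$ together with the monotonicity of $B_j$ in $|t|$ gives $|P_j^{(n)}(\alpha_i)|\le B_j(\alpha_1)$ for every $i$. The triangle inequality then produces
\[
\left|\sum_{i=1}^k \rho_i P_j^{(n)}(\alpha_i)\right|\le B_j(\alpha_1)\cdot \frac{N-1}{N}\le \frac{1}{N}
\]
as soon as $B_j(\alpha_1)\le 1/(N-1)$, which is precisely the first case in the definition of $j_0$, so $Q_j\ge 0$. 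In the Lobato regime $\alpha_1=-1$ I would separate the endpoint term $\rho_1 P_j^{(n)}(-1)=\rho_1(-1)^j$, bound the remaining nodes using $|\alpha_i|\le |\alpha_2|$ for $i\ge 2$ and the estimate \eqref{Ineq3} at $t=\alpha_2$, and then balance against $\sum_{i\ge 2}\rho_i=(N-1)/N-\rho_1$; this is designed to yield the threshold $1/(N-1)$ when $\rho_1<1/N$ and the weaker threshold $2/(N-2)$ when $\rho_1=1/N$.

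The step I expect to require the most care is the borderline Lobato case $\rho_1=1/N$. The naive triangle bound is tight only for even $j$ (giving the stated $2/(N-2)$ threshold via $\frac{2}{N}-B_j(\alpha_2)\frac{N-2}{N}\ge 0$), while for odd $j$ one must instead exploit the structural consequence, noted after \eqref{op_eq1}, that at the transition $N=D(n,2k-1)$ the remaining nodes pair antipodally, $|\alpha_2|=|\alpha_k|$, $|\alpha_3|=|\alpha_{k-1}|,\ldots$. The key technical point will be to verify that $\rho_1=1/N$ forces not only antipodal node pairing but equal pairing of weights $\rho_i=\rho_{k+2-i}$ for $i\ge 2$, so that $\sum_{i\ge 2}\rho_i P_j^{(n)}(\alpha_i)$ vanishes identically for odd $j$ by antisymmetry of $P_j^{(n)}$. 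With that symmetry in hand the sub-case reduces to the even-$j$ computation above, and the proof concludes by monotonicity of $B_j$ in $j$.
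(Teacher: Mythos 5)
Your proposal is, in structure, the same proof the paper gives: the normalization $\sum_{i=1}^k\rho_i=(N-1)/N$ from \eqref{defin_qf} applied to $f\equiv 1$, the Erd\'elyi--Magnus--Nevai bound \eqref{Ineq3} evaluated at the dominant node, the ordering $|\alpha_1|\ge|\alpha_i|$ (resp.\ $|\alpha_2|\ge|\alpha_i|$, $i\ge 3$, when $\alpha_1=-1$), monotonicity in $j$, and the same three-way case split on $\alpha_1$ and $\rho_1$. The only place you go beyond the paper is the borderline case $\alpha_1=-1$, $\rho_1=1/N$ with $j$ odd, where the paper simply asserts $Q_j(n,s)=0$ citing \cite{BDL}; your plan to derive this from the antipodal pairing of nodes at the transition $N=D(n,2k-1)$ together with equality of paired weights (which does follow, e.g.\ from uniqueness of the Lobatto rule for the even weight $(1-t^2)^{(n-3)/2}$ applied to the reflected rule) is a legitimate way to make that step self-contained.

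One caveat, which you share with the paper's own write-up: in the intermediate Lobato sub-case $\alpha_1=-1$, $\rho_1<1/N$, your ``balance'' with threshold $1/(N-1)$ only closes for even $j$ (where $\rho_1P_j^{(n)}(-1)=+\rho_1$ helps and one gets $Q_j\ge\rho_1>0$). For odd $j$ the same triangle inequality yields
\[
Q_j\ \ge\ \frac{1}{N}-\rho_1-\Bigl(\tfrac{N-1}{N}-\rho_1\Bigr)\cdot\frac{1}{N-1}\ =\ -\,\rho_1\,\frac{N-2}{N-1}\,<\,0,
\]
so the claim that the computation ``is designed to yield the threshold $1/(N-1)$'' fails there; the threshold that actually works in that sub-case is $(1-N\rho_1)/(N-1-N\rho_1)$. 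This does not affect the paper's applications (the codes $(10,40)$, $(14,64)$, $(15,128)$ all have $\tau=3$, hence $\alpha_1>-1$), but if you want Theorem~\ref{Qposthm} in full generality you should either sharpen the threshold in the second line of \eqref{def_t_alpha} or supply a parity-sensitive argument for odd $j$ in that regime.
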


\begin{proof}
As the comments on the dynamical
behavior of the quadrature nodes $\{\alpha_i\} $ at the end of Section 2 indicate, we have $|\alpha_1| \ge |\alpha_i|$ for $i=2, \ldots, k$ and in the case
$\alpha_1=-1$, we further have $|\alpha_2|\ge |\alpha_i|$ for $i=3, \ldots, k$.

We first consider the case $|\alpha_1|<1$; i.e., $\alpha_1>-1$. If $j\ge j_0(n,N)$
then we have
\begin{equation}\label{Qposbnd}
Q_j(n,s) \geq \frac{1}{N}-\sum_{i=1}^k\rho_i |P_j^{(n)}(\alpha_i)| \ge \frac{1}{N}-\left(1-\frac{1}{N}\right)\cdot \frac{1}{N-1}=0
\end{equation}
(we used $N\sum_{i=1}^k \rho_i=N-1$ following from \eqref{defin_f0.1} for $f(t)=1$).
The case $\alpha_1=-1$ and $\rho_1<1/N$ is handled similarly
using \eqref{Ineq3} as suggested by the second line of \eqref{def_t_alpha}.

For the final special case $\alpha_1=-1$ and $\rho_1=1/N$ it is clear
(cf. \cite{BDL}) that $Q_j(n,s)=0$ for odd $j$. The case of even $j$ follows similarly
as above using the facts that $P_j^{(n)}(-1)=1$ and that $|\alpha_i|\le |\alpha_2|$ for $i= 3,\ldots, k$.
\end{proof}

Theorem \ref{Qposthm} gives a useful tool for disproving LP-universal optimality. For given $n$ and $N$ and
numerics suggesting that Corollary \ref{LP-cor} may hold one finds explicit $j_0(n,N)$ and calculates the
remaining test functions $Q_j(n,s)$ for every $j \in \{\tau(n,N)+3,\tau(n,N)+4,\ldots,j_0(n,N)-1\}$.
This will be applied in the next subsection for some codes from \cite{BBCGKS}.

\subsection{Examples}

Table 3 lists the first twenty test functions for some interesting configurations.
We utilize $(n,N)$ to denote codes $C \subset \mathbb{R}^n$ with cardinality $| C |=N$.

\begin{table}[ht]
\begin{center}
\vspace{1mm}
\includegraphics[scale=.52]{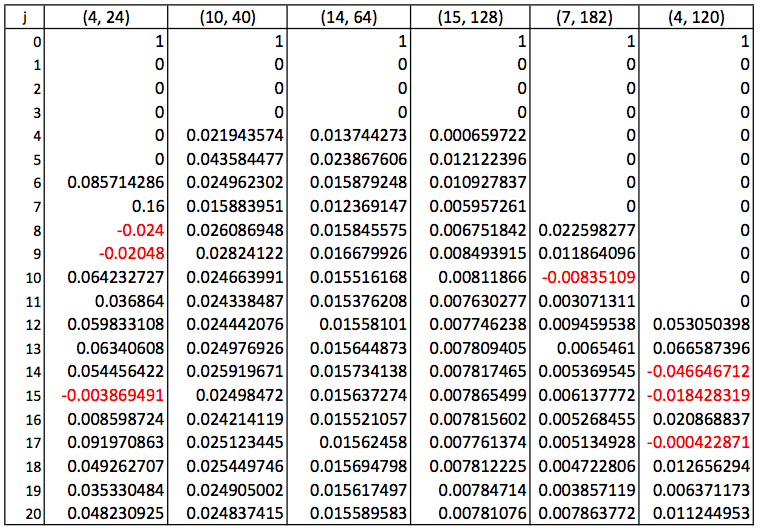} \\
\caption{Test functions for some special $(n,N)$ spherical codes.}
\end{center}
\end{table}

Judging by the behavior of the test functions the linear programming
method will provide improvements on our ULB for $(4,24)$ and $(7,182)$
but it is unlikely to give a solution similar to the case with the $600$-cell
$(4,120)$, where a polynomial in $\mathcal{P}_{17}$ served as an exact lower bound.
Indeed, that the test functions $Q_{14}, Q_{15}$ and $Q_{17}$ are negative provides
additional insight on the unique property of the $600$-cell as the only universally
optimal code known that is not a sharp configuration.

The first configuration $(4,24)$ is the $D_4$ root system, or the so-called
kissing number configuration in $\mathbb{R}^4$ (see \cite{M}),
which was shown by Cohn, Conway, Elkies, and Kumar (see \cite{CCEK})
not to be universal. The negative test functions $Q_8(4,s)$ and
$Q_9(4,s)$, $s=\alpha_2 \approx 0.4749504897$, suggest searching for a polynomial
$f(t)=\sum_{i=0}^9 f_i P_i^{(4)}(t)$ with $f_6=f_7=0$ and four touching
points of the graphs of $f(t)$ and the potential $h(t)$. We have developed a numerical algorithm for handling such situations. For example, if $h(t)=\frac{1}{2(1-t)}$ is the Newton potential,
our numerical calculations led to the polynomial
\begin{eqnarray*}
f(t) &=& 0.4987 + 0.4852t + 0.4535t^2 + 0.5546t^3 + 0.9401t^4 + 0.8425t^5 \\
&& -\,0.3305t^6  - 0.7479t^7 + 0.1889t^8 + 0.37394t^9 \\
&=& 0.0073P_9^{(4)}(t)+ 0.0066P_8^{(4)}(t)+0.0659P_5^{(4)}(t)+ 0.2384P_4^{(4)}(t)+ 0.5116P_3^{(4)}(t) \\
&& +\,0.7915P_2^{(4)}(t)+ 0.9236P_1^{(4)}(t)+ 0.7142P_0^{(4)}(t).
\end{eqnarray*}
The Hermite interpolation points are approximately $-0.860297$, $-0.489872$,
$-0.195724$ and $0.47850$. The bound obtained from $f(t)$
is $333.1575$, while the universal lower bound (\ref{bound_odd}) gives
$R_5(4,24;1/(2(1-t))=333$ and the energy of the $D_4$ root system is 334.
Theoretical and computational aspects of the aforementioned algorithm for improvements (when possible) of our ULB and their nature will be discussed elsewhere.

\begin{theorem}\label{LastThm}
The spherical codes $(n,N)=(10,40)$, $(14,64)$ and $(15,128)$ are not LP-universally optimal.

\end{theorem}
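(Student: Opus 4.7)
I would combine Corollary~\ref{LP-cor} with Theorem~\ref{Qposthm} to show, for each of $(n,N)=(10,40)$, $(14,64)$, $(15,128)$, that every test function $Q_j(n,\alpha_k)$ with $j>\tau(n,N)$ is non-negative, so that $\mathcal{W}(n,N,\mathcal{P};h)=R_{\tau(n,N)}(n,N;h)$ for every absolutely monotone $h$. I would then argue that for the codes $C$ tabulated in \cite{BBCGKS} one has $E(C;h)>R_{\tau(n,N)}(n,N;h)$ for some strictly absolutely monotone $h$: by Theorem~\ref{thm1}(a) applied to the Hermite interpolant of Theorem~\ref{thm3.2}, attainment of the ULB forces the inner product set of $C$ to be contained in the Levenshtein quadrature node set $\{\alpha_1,\ldots,\alpha_k\}$, which fails for these three configurations.

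\textbf{Execution.} First I would compute $\tau(n,N)$ from \eqref{tauNn}, solve $N=L_\tau(n,\alpha_k)$ for $\alpha_k$, and extract the nodes and weights $\{(\alpha_i,\rho_i)\}_{i=1}^k$ via the Radau/Lobatto rule \eqref{defin_qf}. Theorem~\ref{thm5.1} immediately disposes of the degrees $\tau+1$ and $\tau+2$. For large $j$, the Gegenbauer estimate \eqref{Ineq3} produces a concrete threshold $j_0(n,N)$ beyond which Theorem~\ref{Qposthm} guarantees $Q_j(n,\alpha_k)\ge 0$. The remaining finite window $\tau+3\le j\le j_0(n,N)-1$ would be checked by direct evaluation of
\[
Q_j(n,\alpha_k)=\frac{1}{N}+\sum_{i=1}^k\rho_i P_j^{(n)}(\alpha_i),
\]
the positivity of which for small $j$ is already visible in Table~3 for exactly these three pairs. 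Once $Q_j\ge 0$ for all $j>\tau(n,N)$, Corollary~\ref{LP-cor} identifies $\mathcal{W}(n,N,\mathcal{P};h)$ with $R_{\tau(n,N)}(n,N;h)$. For the comparison step, I would read off the inner product distributions of the three configurations from \cite{BBCGKS} and verify that each of them has strictly more than $k$ distinct values, hence cannot be a subset of $\{\alpha_1,\ldots,\alpha_k\}$; by strict absolute monotonicity of, say, the Newton or Gauss potential $h$, the Hermite interpolant of Theorem~\ref{thm3.2} lies strictly below $h$ off the node set, and summing over the excess inner products of $C$ yields $E(C;h)>R_{\tau(n,N)}(n,N;h)$.

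\textbf{Main obstacle.} The delicate point is the finite-window verification $Q_j(n,\alpha_k)\ge 0$ for $\tau+3\le j<j_0(n,N)$: a single spurious negative value would collapse the argument. Each of the three pairs must be handled independently with sufficient numerical precision to make the signs unambiguous, and the antipodal case $\alpha_1=-1$ with $\rho_1=1/N$ requires the parity split used inside the proof of Theorem~\ref{Qposthm}. The final comparison $E(C;h)>R_{\tau(n,N)}(n,N;h)$ is, by contrast, robust, since the strict inequality $f<h$ away from $\{\alpha_i\}$ is preserved under small perturbations of the code and of the potential.
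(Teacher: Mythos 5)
Your proposal is correct and follows essentially the same route as the paper: Theorem~\ref{Qposthm} supplies the threshold $j_0(n,N)$, the finite window $\tau+3\le j<j_0$ is settled by the numerical values in Table~3 (with degrees $\tau+1,\tau+2$ covered by Theorem~\ref{thm5.1}), and Corollary~\ref{LP-cor} then pins $\mathcal{W}(n,N,\mathcal{P};h)$ to the ULB, which these codes do not attain. Your final step — using the Hermite error term for strictly absolutely monotone $h$ to show the codes' inner products cannot all lie in the node set, hence $E(C;h)>R_{\tau}(n,N;h)$ — is left implicit in the paper's proof, so your write-up is if anything slightly more complete.
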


\begin{proof}
The codes $(10,40)$ and $(14,64)$ were conjectured by
Ballinger, Blekherman, Cohn, Giansiracusa, Kelly, and Sch\"{u}rmann in \cite{BBCGKS}
to be universally optimal. It follows from Theorem \ref{Qposthm} and numerical calculations
as explained in the end of the last subsection that these codes are not LP-universally optimal.
Indeed, we have $\tau(10,40)=3$ (so $\alpha_1>-1$), $j_0(10,40)=10$ and the second column in
Table 3 shows that this code is not LP-universally optimal. Similarly, $\tau(14,64)=3$, $j_0(14,64)=8$,
and the inspection of the third column of Table 3 suffices. The code $(15,128)$ was not conjectured
to be universally optimal (but not eliminated) in \cite{BBCGKS} and we see that it is not
LP-universally optimal because of $\tau(15,128)=3$, $j_0(14,64)=9$, and the fourth column in Table 3.
\end{proof}

\end{document}